\newcommand{\R}{\ensuremath{\mathbb{R}}} 
\newcommand{\C}{\ensuremath{\mathbb{C}}} 
\newcommand{\dx}[1][x]{\ensuremath{\,{\rm{d}} #1}}
\DeclareMathOperator{\di}{div}
\DeclareMathOperator{\cu}{curl}
\newcommand{\norm}[1]{\lVert#1\rVert}
\newcommand{\kommentar}[1]{}
\newtheorem{thm}{Theorem}[section]
\newtheorem{lem}{Lemma}[section]
\theoremstyle{definition}
\newtheorem{defn}{Definition}[section]
\theoremstyle{remark}
\newtheorem{rem}{Remark}[section]
\numberwithin{equation}{section}
\title[Localizing electromagnetic fields]{On localizing and concentrating electromagnetic fields}
\author[Harrach]{Bastian Harrach}
\address{Institute for Mathematics, Goethe University Frankfurt, Germany}
\email{harrach@math.uni-frankfurt.de}
\author[Lin]{Yi-Hsuan Lin}
\address{Institute for Advanced Study, The Hong Kong University Science and Technology, Clear Water Bay, Kowloon, Hong Kong}
\email{yihsuanlin3@gmail.com}
\author[Liu]{Hongyu Liu}
\address{Department of Mathematics, Hong Kong Baptist University, Kowloon, Hong Kong SAR, China}
\email{hongyu.liuip@gmail.com; hongyuliu@hkbu.edu.hk}
\begin{document}
	
\begin{abstract}
		We consider field localizing and concentration of electromagnetic
		waves governed by the time-harmonic anisotropic Maxwell system in a bounded domain.
		It is shown that there always exist certain boundary inputs which can generate
		electromagnetic fields with energy localized/concentrated in a given
		subdomain while nearly vanishing in another given subdomain. The theoretical results may have potential applications
		in telecommunication, inductive charging and medical therapy. 
		We also derive a related Runge approximation result for the time-harmonic anisotropic Maxwell system with partial boundary data.
		
		\medskip
		
		\noindent{\bf Keywords}. electromagnetic waves, localizing and concentration, anisotropic Maxwell system, Runge approximation, partial data
		
		\noindent{\bf Mathematics Subject Classification (2010)}: Primary  35Q61; Secondary 78A25, 78A45
	\end{abstract}
	
	\maketitle

	\tableofcontents

\section{Introduction}

\subsection{Background and motivation}

The electromagnetic (EM) phenomena are ubiquitous and they lie at
the heart of many scientific and technological applications including
radar and sonar, geophysical exploration, medical imaging, information
processing and communication. In this paper, we are mainly concerned with the mathematical
study of field localizing and concentration of electromagnetic waves
governed by the time-harmonic Maxwell system in a bounded anisotropic medium.
More specifically, we show that there always exist certain boundary
inputs which can generate the desired electromagnetic fields that are localized/concentrated
in a given subdomain while nearly vanishing in another given subdomain.

The localizing and concentration of electromagnetic fields can have many potential
applications. In telecommunication \cite{Wiki1}, one common means of
transmitting information between communication participants is via
the electromagnetic radiation. In a certain practical scenario, say
the secure communication, one may intend the information to be transmitted
mainly to a partner located at a certain region, while avoid the transmission
to another region. Clearly, if the information is encoded into the
electromagnetic waves that are localized and concentrated in the region
where the partner is located while are nearly vanishing in the undesired
region, then one can achieve the expected telecommunication effect. In the
setup of our proposed study, one can easily obtain the aforementioned
communication effect, in particular if the communication participants
transmit and receive information on some surface patches.

Concentrating electromagnetic fields can also be useful in inductive
charging, also known as wireless charging or cordless charging \cite{Wiki2},
which is an emerging technology that can have significant impact on
the real life. It uses electromagnetic fields to transfer energy between
two objects through electromagnetic induction. Clearly, the energy
transfer would be more efficient and effective if the corresponding
electromagnetic fields are concentrated around the charging station.
The localizing of electromagnetic fields can also have potential
application in electromagnetic therapy. Though it is mainly considered
to be pseudoscientific with no affirmative evidence, the electromagnetic
therapy has been widely practiced which claims to treat disease by
applying electromagnetic radiation to the body. If the electromagnetic
therapy shall be proven to be effective, then through the use of certain
purposely designed sources, one can generate electromagnetic fields
that are concentrated around the diseased area.

The above conceptual and potential applications make the study of
field concentration and localizing much appealing. Nevertheless, it
is emphasized that in the current article, we are mainly concerned
with the mathematical and theoretical study. We achieve some substantial
progress on this interesting topic, though the corresponding study
is by no means complete. It is also interesting to note that the localizing
of resonant electromagnetic fields has been used to produce invisibility
cloaking and has received significant attention in the literature in recent years \cite{ammari2013spectral,ando2016plasmon,bouchitte2010cloaking,li2015quasi,milton2006cloaking,nicorovici2007quasistatic}.
The corresponding study is mainly based on the use of plasmonic materials
to induce the so-called anomalous localized resonance.

Our mathematical argument for proving the existence of the localized and concentrated electromagnetic fields is mainly based on combining the unique continuation property for the anisotropic Maxwell system with a functional analytic duality argument developed in \cite{gebauer2008localized}.
By a similar argument, we also obtain a related Runge approximation property.
 
The use of blow up solutions has a long tradition in the study of inverse boundary value problems, cf.\ \cite{alessandrini1990singular,isakov1988uniqueness,kohn1984determining,kohn1985determining} for early seminal works on this topic.
Moreover, the combination of localized fields and monotonicity relations have led to the development of
monotonicity-based methods for obstacle/inclusion detection, cf.\ \cite{tamburrino2002new,harrach2013monotonicity}
for the origins and mathematical justification of this approach,  
\cite{barth2017detecting,brander2017monotonicity,garde2017comparison,garde2017convergence,garde2017regularized,harrach2010exact,harrach2015combining,harrach2017nonlocal,harrach2016enhancing,harrach2016monotonicity,harrach2015resolution,maffucci2016novel,su2017monotonicity,tamburrino2016monotonicity,ventre2017design,zhou2017monotonicity}
for further recent contributions, and the recent works \cite{harrach2017monotonicity,griesmaier2018monotonicity} for the Helmholtz equation. Theoretical uniqueness results for inverse coefficient problems have also been obtained by this approach in \cite{arnold2013unique,harrach2009uniqueness,harrach2012simultaneous,harrach2010exact,harrach2017local}). 

In this work, we show the existence of localized electromagnetic fields for the more challenging case of time-harmonic anisotropic Maxwell system
with partial data. We also derive a Runge approximation result, that shows that every solution in a subdomain can be approximately well
by a solution on the whole domain. In that context let us note the famous equivalence theorem from Peter Lax \cite{lax1956stability}: the weak unique continuation property is equivalent to the Runge approximation property for the second order elliptic equation. In our study, we affirmatively verify this property still holds for the anisotropic Maxwell system.

The rest of the present section is devoted to the mathematical description of the setup of our study and the statement of the main result.

\subsection{Mathematical setup and statement of the main result }

Let $\Omega$ be a simply connected domain in $\mathbb{R}^{3}$
with a Lipschitz connected boundary $\partial\Omega$. Let $\epsilon=(\epsilon_{ij})_{1\leq i,j\leq 3}$ and
$\mu=(\mu_{ij})_{1\leq i,j\leq 3}$ be two $3\times 3$ real matrix-valued functions satisfying

\begin{itemize}
\item  Strong ellipticity: There exist constants $\mu_{0}>0$
and $\epsilon_{0}>0$ verifying 
\begin{equation}
\begin{cases}
\mu_{0}|\xi |^2\leq\sum_{i,j=1}^3\mu_{ij}(x)\xi_i\xi _j\leq\mu_{0}^{-1}|\xi|^2,\\
\epsilon_{0}|\xi|^2\leq\sum_{i,j=1}^3\epsilon_{ij}(x)\xi_i\xi_j\leq\epsilon_{0}^{-1}|\xi|^2,\label{Positive lower bound}
\end{cases}
\mbox{ for any }x\in\Omega \text{ and }\xi \in \mathbb R^3.
\end{equation}

\item Smoothness: $\epsilon$ and $\mu$ are piecewise Lipschitz continuous matrix-valued functions defined in $\Omega$.

\item Symmetry: $\epsilon$ and $\mu$ are symmetric matrices, that is, $\epsilon_{ij}=\epsilon_{ji}$ and $\mu_{ij}=\mu_{ji}$ for all $1\leq i,j\leq 3$.

\end{itemize}
 
The functions $\epsilon$ and $\mu$, respectively, signify the electric permittivity and magnetic permeability of the medium in $\Omega$. Consider the time-harmonic electromagnetic wave propagation in $\Omega$. \textcolor{black}{With the $e^{ikt}$ time-harmonic convention assumed, we let $E(x)$ and $H(x)$, respectively, denote the electric and magnetic fields. Here, $k\in\mathbb{R}_+$ signifies a wavenumber. Then the  
electromagnetic wave propagation is governed by the following Maxwell system, 
\begin{equation}
\begin{cases}
\nabla\times E-ik\mu H=0 & \mbox{ in }\Omega,\\
\nabla\times H+ik\epsilon E=0 & \mbox{ in }\Omega,\\
\nu\times E=\begin{cases}
f & \mbox{ on }\Gamma\\
0 & \mbox{ otherwise}
\end{cases} & \mbox{ on }\partial\Omega,
\end{cases}\label{Maxwell system}
\end{equation}
where $\Gamma$ is an arbitrary nonempty relative open subset of $\partial\Omega$
and $\nu$ is the unit outer normal vector on $\partial\Omega$. }
\textcolor{black}{It is assumed that $k>0$ is not an eigenvalue (or non-resonance,
see Section \ref{Section 2}) for \eqref{Maxwell system}} and $f\in C^\infty_c(\Gamma)$ throughout this paper. 

The main result concerning the localized electromagnetic fields for the anisotropic Maxwell system \eqref{Maxwell system} is contained in the following theorem. 

\begin{thm}
\label{Main Theorem}
Let $\Omega\subset\mathbb{R}^{3}$ be a bounded Lipschitz domain and $\Gamma\subseteq\partial \Omega$ be
a relatively open subset of the boundary. Let $\epsilon,\mu\in L^\infty(\Omega,\R^{3\times 3})$ be real-valued, piecewise Lipschitz continuous functions satisfying \eqref{Positive lower bound} and $k>0$ be
a non-resonant wavenumber. Let $D\Subset\Omega$ be a closed set with connected complement $\Omega \setminus D$.
For every open set $M\subseteq \Omega$ with $M\not\subseteq D$ (see Figure \ref{fig:1} for the schematic illustration), 
there exists a sequence $\left\{ f^{(\ell)}\right\} _{\ell\in\mathbb{N}}\subset C_c^\infty(\Gamma)$
such that the electromagnetic fields fulfill
\begin{equation*}
\int_{M}\left(|E^{(\ell)}|^{2}+|H^{(\ell)}|^2\right)\dx\to\infty\quad \mbox{ and }\quad \int_{D}\left(|E^{(\ell)}|^{2}+|H^{(\ell)}|^2\right)\dx\to0\mbox{ as }\ell\to\infty,
\end{equation*}
where, for $\ell\in\mathbb{N}$, $(E^{(\ell)},H^{(\ell)})\in L^2(\Omega)^3\times L^2(\Omega)^3$
is a solution of 
\begin{align*}
\begin{cases}
\nabla\times E^{(\ell)}-ik\mu H^{(\ell)}=0   & \mbox{ in }\Omega,\\
\nabla\times H^{(\ell)}+ik\epsilon E^{(\ell)}=0  & \mbox{ in }\Omega,
\end{cases}
\end{align*}
with boundary data
\begin{align*}
\nu\times E^{(\ell)}|_{\partial \Omega}=\begin{cases}
f^{(\ell)} & \mbox{ on }\Gamma,\\
0 & \mbox{ otherwise.}
\end{cases}
\end{align*}
\end{thm}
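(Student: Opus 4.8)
The plan is to follow the functional-analytic duality strategy of Gebauer \cite{gebauer2008localized}, which reduces the construction of the localized fields to a contradiction argument based on unique continuation. The key abstract fact is the following: if $A\colon X\to Y$ and $B\colon X\to Z$ are bounded linear operators between Hilbert spaces, then there exists a sequence $(x^{(\ell)})$ with $\|Ax^{(\ell)}\|\to\infty$ and $\|Bx^{(\ell)}\|\to 0$ unless $\operatorname{ran}(A^*)\subseteq\operatorname{ran}(B^*)$ (equivalently, $\|Ax\|\le C\|Bx\|$ for all $x$). Here I take $X$ to be the input space $C_c^\infty(\Gamma)$ completed in a suitable norm (or $L^2_\diamond(\Gamma)$-type space of tangential traces), $A$ the operator sending the boundary datum $f$ to the restriction $(E,H)|_M\in L^2(M)^6$, and $B$ the operator sending $f$ to $(E,H)|_D\in L^2(D)^6$. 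Both are bounded because $k$ is non-resonant, so the solution operator $f\mapsto (E,H)$ is bounded from the trace space into $L^2(\Omega)^6$ (this uses the well-posedness set up in Section \ref{Section 2}). Thus it suffices to show that the estimate $\|(E,H)\|_{L^2(M)}\le C\|(E,H)\|_{L^2(D)}$ \emph{fails}.

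To rule out this estimate, I would argue by contradiction and identify the ranges of the adjoints with spaces of solutions. Testing against an arbitrary source supported in $M$ (resp.\ $D$) and integrating by parts against the Maxwell system, one sees that $\operatorname{ran}(A^*)$ consists of the tangential traces on $\Gamma$ of solutions $(E',H')$ of the \emph{adjoint} Maxwell system on $\Omega\setminus\overline M$ whose source lies in $M$; similarly for $B^*$ with $M$ replaced by $D$. If the norm estimate held, then every such trace coming from a source in $M$ would also come from a source in $D$. Using a cut-off/patching argument, the difference of the two corresponding global solutions would solve the homogeneous adjoint Maxwell system in $\Omega\setminus(\overline M\cup D)$ with vanishing Cauchy data on $\Gamma$, hence, by the unique continuation property for the anisotropic Maxwell system (the analogue of Lax's equivalence, invoked as the Runge approximation ingredient), it vanishes on the connected set $\Omega\setminus D$. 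This forces any solution with source in $M$ to be, on a neighborhood of $M\setminus D$, the restriction of a solution with source in $D$ — but since $M\not\subseteq D$, one can place a point source (or a concentrated source) at a point of $M\setminus\overline D$, whose field is singular there and therefore cannot coincide with the smooth field of any source supported away from that point. This contradiction delivers the desired sequence.

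Concretely, the steps are: (i) recall from Section \ref{Section 2} the well-posedness of \eqref{Maxwell system} for non-resonant $k$ and define the bounded operators $A$ and $B$ above on the appropriate Hilbert-space completion of $C_c^\infty(\Gamma)$; (ii) state and prove the abstract range/norm dichotomy lemma (this is the lemma from \cite{gebauer2008localized}, reproved in the present functional-analytic setting); (iii) compute $A^*$ and $B^*$ via integration by parts, identifying their ranges with traces on $\Gamma$ of solutions of the adjoint Maxwell system with prescribed source support; (iv) assume the inclusion $\operatorname{ran}(A^*)\subseteq\operatorname{ran}(B^*)$ and derive, via cutoff and the unique continuation / Runge approximation property, that this is incompatible with the existence of singular (point-source) solutions localized in $M\setminus\overline D$; (v) conclude the existence of $(f^{(\ell)})$ with $\|(E^{(\ell)},H^{(\ell)})\|_{L^2(M)}\to\infty$ and, after normalizing, $\|(E^{(\ell)},H^{(\ell)})\|_{L^2(D)}\to0$; finally replace the limiting boundary data by smooth compactly supported approximants in $C_c^\infty(\Gamma)$ using density and continuity of the solution operator.

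The main obstacle I anticipate is step (iv), specifically the unique continuation argument for the anisotropic Maxwell system with only \emph{partial} Cauchy data on $\Gamma$: one must pass from vanishing tangential traces on the relatively open patch $\Gamma$ to vanishing of the full field on the connected complement $\Omega\setminus D$, which requires first deducing that both Cauchy data vanish on $\Gamma$ and then applying a weak unique continuation principle for first-order Maxwell-type systems with merely piecewise Lipschitz coefficients (this is where the Runge approximation property / Lax-type equivalence stated in the introduction does the heavy lifting). A secondary technical point is the correct choice of function spaces for the boundary data and the duality pairing on $\Gamma$ so that $A^*$ and $B^*$ are cleanly characterized; the symmetry of $\epsilon$ and $\mu$ is used here to relate the adjoint Maxwell system to the original one.
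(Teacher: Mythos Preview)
Your overall architecture matches the paper's exactly: define the restriction operators $\mathcal L_M,\mathcal L_D$, invoke the Gebauer range/norm dichotomy (your step (ii) is the paper's Lemma~\ref{Lemma funal anal 2}), compute the adjoints via integration by parts as traces of solutions to a source problem (your step (iii) is Lemma~\ref{lemma:adjoint_L}), and then rule out $\mathscr R(\mathcal L_M^*)\subseteq\mathscr R(\mathcal L_D^*)$.

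The divergence is in step (iv). The paper does \emph{not} use point sources or any singular-solution blow-up. Instead it shows directly that $\mathscr R(\mathcal L_M^*)\cap\mathscr R(\mathcal L_D^*)=\{0\}$: given $g$ in the intersection, the two adjoint solutions $(E_M,H_M)$ and $(E_D,H_D)$ have matching Cauchy data on $\Gamma$, so by UCP they agree on $\Omega\setminus(\overline M\cup D)$; one then \emph{patches} them (taking $E_D$ on $M$ and $E_M$ on $D$) to obtain a global homogeneous solution with zero boundary data, which is identically zero because $k$ is non-resonant, forcing $g=0$. Combined with the density of $\mathscr R(\mathcal L_M^*)$ (from injectivity of $\mathcal L_M$, again UCP), this gives $\mathscr R(\mathcal L_M^*)\not\subseteq\mathscr R(\mathcal L_D^*)$ for free.

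Your proposed route via a point source at $x_0\in M\setminus\overline D$ is the older Isakov/Alessandrini idea and is morally correct, but it is heavier here: a Dirac source is not in $L^2(M)^3$, so it is not in the domain of $\mathcal L_M^*$, and you would need either a fundamental solution for the anisotropic Maxwell system with piecewise Lipschitz coefficients or a careful limiting argument with concentrating $L^2$ sources. The paper's patching-plus-non-resonance argument sidesteps all of that. Also, the Runge approximation you flag as ``doing the heavy lifting'' is not used in the paper's proof of Theorem~\ref{Main Theorem}; only the UCP of Theorem~\ref{Theorem UCP} is invoked, and Runge is derived afterwards as an independent result. Finally, your claim that the difference ``vanishes on the connected set $\Omega\setminus D$'' overshoots: UCP only gives vanishing on $\Omega\setminus(\overline M\cup D)$, and it is precisely the patching step (not a further continuation) that closes the argument.
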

\begin{rem}
We call the sequence $\{(E^{(\ell)},H^{(\ell)})\}_{\ell \in \mathbb N}$ in Theorem \ref{Main Theorem} to be the localized electromagnetic fields.
\end{rem}
\begin{figure}[t]
\centering {\includegraphics[width=6cm]{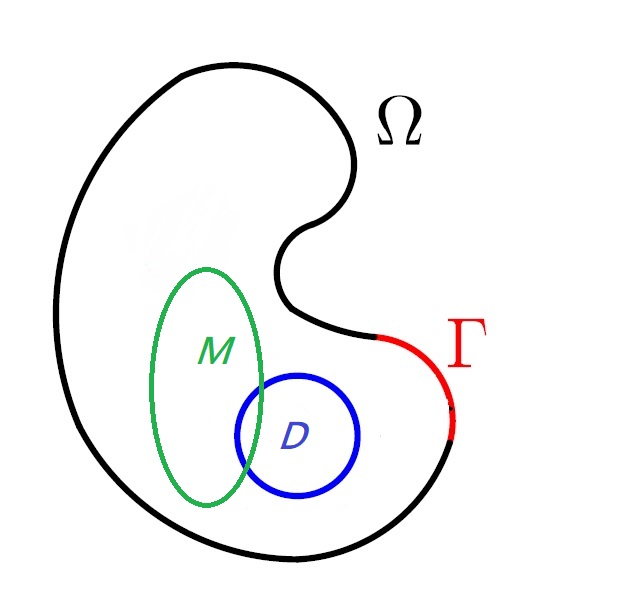}} \protect\caption{Schematic illustration of the field localizing and concentration. }
\label{fig:1} 
\end{figure}

The rest of the paper is organized as follows. In Section \ref{Section 2}, we
present results on the well-posedness of the time-harmonic anisotropic Maxwell system. We also provide the unique continuation property (UCP) for the anisotropic Maxwell system, whenever the coefficients $\mu$ and $\epsilon$ are piecewise Lipschitz continuous matrix-valued functions. In Section \ref{Section Localized fields}, we demonstrate that there exist localized electromagnetic fields, which proves Theorem \ref{Main Theorem}. The method relies on functional analysis
techniques. In Section \ref{Section 4} we prove a related Runge approximation property for the anisotropic Maxwell system with parital boundary data.

\section{The anisotropic Maxwell system in a bounded domain \textcolor{black}{\label{Section 2}}}

In this section, we summarize some useful results of the Maxwell system, including unique solvability and a unique continuation property. Throughout this section let $\Omega\subset\mathbb{R}^{3}$ be a bounded Lipschitz domain. 

\subsection{Spaces and traces}\label{subsect:spaces_traces}
We introduce the spaces
\begin{align*}
H(\di,\Omega)&:=\left\{ E\in L^{2}(\Omega)^3;\quad\nabla\cdot E\in L^{2}(\Omega) \right\},\\
H(\mathrm{curl},\Omega)&:=\left\{ E\in L^{2}(\Omega)^3;\quad\nabla\times E\in L^{2}(\Omega)^3\right\},
\end{align*}
and the tangential trace operators 
\begin{align*}
\gamma_t:&\ H(\mathrm{curl},\Omega)\to H^{-1/2}(\mathrm{div}_{\partial \Omega},\partial \Omega ), \quad E\mapsto \gamma_t E:=\nu \times E|_{\partial \Omega},\\
\gamma_T:&\ H(\mathrm{curl},\Omega)\to H^{-1/2}(\mathrm{curl}_{\partial \Omega},\partial \Omega ), \quad E\mapsto \gamma_T E := \nu \times (E\times \nu)|_{\partial \Omega},
\end{align*}
where here and in the following all functions are complex-valued unless indicated otherwise.  
Then $\gamma_t$ and $\gamma_T$ are surjective bounded linear operators with bounded right inverses $\gamma_t^{-1}$ and
and $\gamma_T^{-1}$, the space $H^{-1/2}(\mathrm{div}_{\partial \Omega},\partial \Omega )$ can be identified with the dual of $H^{-1/2}(\mathrm{curl}_{\partial \Omega},\partial \Omega )$,
and for all $E,F\in H(\mathrm{curl},\Omega)$ we have the integration by parts formula
\begin{equation}
\label{eq:int_by_parts_tangential}
\int_\Omega (\nabla \times E) \cdot F \dx - \int_\Omega E \cdot (\nabla \times F) \dx
= \int_{\partial \Omega} \left(\nu \times E|_{\partial \Omega}\right) \cdot  \left(\nu \times (F|_{\partial \Omega}\times \nu)\right) \dx[S]
\end{equation}
(cf. \cite{buffa2002traces,M2003book}), where the dual pairing on $H^{-1/2}(\mathrm{div}_{\partial \Omega},\partial \Omega )\times H^{-1/2}(\mathrm{curl}_{\partial \Omega},\partial \Omega )$ is written 
as integral for the sake of readability.

The subspace of $H(\cu,\Omega)$-functions with vanishing tangential trace is denoted by
\begin{align*}
H_0(\cu, \Omega)&:=\{ E\in H(\cu,\Omega):\ \nu \times E|_{\partial \Omega}=0\}.
\end{align*}
$H_0(\cu, \Omega)$ is a closed subspace of $H(\cu,\Omega)$ and $C_0^\infty(\Omega)^3$ is dense in $H_0(\cu, \Omega)$, cf. \cite{M2003book}.

To treat partial boundary data on a relatively open subset $\Gamma\subseteq \partial \Omega$ , we also introduce 
the space of functions on $\Gamma$ that can be extended by zero to the trace of a  
$H(\mathrm{curl},\Omega)$-function
\begin{align}\label{abstract trace space}
H(\Gamma):=\text{closure of }C^\infty_c(\Gamma)\text{ in } H^{-1/2}(\mathrm{div}_{\partial \Omega},\partial \Omega ).
\end{align}
For all $E\in H(\mathrm{curl},\Omega)$, we identify the restricted trace 
$\nu \times (E\times \nu)|_{\Gamma}$ with the quotient space element
\[
\nu \times (E\times \nu)|_{\partial \Omega} + H(\Gamma)^\perp \in H^{-1/2}(\mathrm{curl}_{\partial \Omega},\partial \Omega ) / H(\Gamma)^\perp
 = H(\Gamma)^*,
\]
and thus define the restricted trace operator 
\[
\gamma_T^{(\Gamma)}:\ H(\mathrm{curl},\Omega)\to H(\Gamma)^*,\quad E\mapsto \gamma_T^{(\Gamma)} E := \nu \times (E\times \nu)|_{\Gamma}.
\]

\subsection{Well-posedness of the anisotropic Maxwell system}

Given anisotropic coefficients $\epsilon,\mu\in L^\infty(\Omega,\R^{3\times3})$ satisfying \eqref{Positive lower bound},
$k>0$, $J,K\in L^{2}(\Omega)^3$ and $f\in H^{-1/2}(\mathrm{div}_{\partial \Omega},\partial \Omega )$, we consider the 
Maxwell system for $(E,H)\in L^2(\Omega)^3\times L^2(\Omega)^3$
\begin{alignat}{2}
\label{eq:Maxwell_inhom1} \nabla\times E-ik\mu H&=K  && \quad \mbox{ in }\Omega,\\
\label{eq:Maxwell_inhom2} \nabla\times H+ik\epsilon E&=J  && \quad \mbox{ in }\Omega,\\
\label{eq:Maxwell_inhom3} \nu\times E|_{\partial \Omega}&=f.
\end{alignat}
Note that the boundary condition \eqref{eq:Maxwell_inhom3} is well-defined since 
\eqref{eq:Maxwell_inhom1} implies $E\in H(\cu,\Omega)$.

For the variational formulation of \eqref{eq:Maxwell_inhom1}--\eqref{eq:Maxwell_inhom2} we introduce the sesquilinear form
\begin{align*}
\mathcal B &:\ H(\cu,\Omega)\times H(\cu,\Omega)\to \C,\\
\mathcal B(E,F) &:=\int_\Omega \left( \mu^{-1}  \nabla \times E \right)
\cdot \left(\nabla \times \overline{F}\right) \dx 
 - \int_\Omega  k^2 \epsilon E \cdot \overline F \dx.
\end{align*}

Then we have the following variational formulation and well-posedness result.

\begin{thm}\label{thm:Maxwell_well_posedness}
\begin{enumerate}[(a)]
\item $(E,H)\in L^2(\Omega)^3\times L^2(\Omega)^3$ solve \eqref{eq:Maxwell_inhom1}--\eqref{eq:Maxwell_inhom2}
if and only if $E\in H(\cu,\Omega)$ solves
\begin{equation*}\label{eq:Maxwell_inhom_variational}
\mathcal B(E,F)= \int_\Omega ikJ\cdot \overline F\dx +  \int_\Omega  \left( \mu^{-1} K \right) \cdot \left(\nabla \times \overline F\right) \dx
\quad \text{for all $F\in H_0(\cu,\Omega)$,}
\end{equation*}
and $H= -\dfrac{i}{k} \mu^{-1} (\nabla \times E - K)$.
\item The set of $k>0$ for which the homogeneous system \eqref{eq:Maxwell_inhom1}--\eqref{eq:Maxwell_inhom3} with
$J=0$, $K=0$, and $f=0$, possesses a non-trivial solution is discrete. We call these $k$ \emph{resonance frequencies}.
\item If $k$ is not a resonance frequency, then there exists a unique solution $(E,H)\in L^2(\Omega)^3\times L^2(\Omega)^3$ of
\eqref{eq:Maxwell_inhom1}--\eqref{eq:Maxwell_inhom3}, and the solution depends linearly and continuously on $J,K\in L^2(\Omega)^3$ and
$f\in H^{-1/2}(\mathrm{div}_{\partial \Omega},\partial \Omega )$.
\end{enumerate}
\end{thm}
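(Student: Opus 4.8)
The plan is to treat the three parts in order, reducing (b) and (c) to standard Fredholm theory once the variational setting has been fixed. For part (a) I would just unwind the definitions: if $(E,H)\in L^2(\Omega)^3\times L^2(\Omega)^3$ solves \eqref{eq:Maxwell_inhom1}--\eqref{eq:Maxwell_inhom2}, then \eqref{eq:Maxwell_inhom1} gives $\nabla\times E=K+ik\mu H\in L^2(\Omega)^3$, so $E\in H(\cu,\Omega)$ and $H=-\tfrac{i}{k}\mu^{-1}(\nabla\times E-K)$; substituting this into \eqref{eq:Maxwell_inhom2}, pairing with $F\in C_0^\infty(\Omega)^3$, and integrating by parts via \eqref{eq:int_by_parts_tangential} (the boundary term vanishes since $\nu\times F|_{\partial\Omega}=0$) yields the variational identity, which extends to every $F\in H_0(\cu,\Omega)$ because $C_0^\infty(\Omega)^3$ is dense in $H_0(\cu,\Omega)$ and both sides are continuous there. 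Conversely, given $E\in H(\cu,\Omega)$ solving the variational problem I would \emph{define} $H$ by the stated formula (so \eqref{eq:Maxwell_inhom1} holds by construction) and recover \eqref{eq:Maxwell_inhom2} in the distributional sense — hence in $L^2(\Omega)^3$, since $J$ and $\epsilon E$ lie there — by testing against $C_0^\infty(\Omega)^3\subset H_0(\cu,\Omega)$.

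For (b) and (c) I would first reduce to homogeneous boundary data: using the bounded right inverse $\gamma_t^{-1}$ of the tangential trace, write $E=u+\gamma_t^{-1}f$ with $u\in H_0(\cu,\Omega)$ the new unknown, so that by (a) the problem is equivalent to finding $u\in H_0(\cu,\Omega)$ with $\mathcal B(u,F)=\ell(F)$ for all $F\in H_0(\cu,\Omega)$, where $\ell$ is a bounded conjugate-linear functional depending linearly and continuously on $(J,K,f)$. Let $\mathcal A_k\colon H_0(\cu,\Omega)\to H_0(\cu,\Omega)^*$ be the operator induced by $\mathcal B$. The crux is that $\mathcal A_k$ is Fredholm of index zero; this is not visible directly, since $\mathcal B$ is not coercive on $H_0(\cu,\Omega)$ (the curl annihilates $\nabla H_0^1(\Omega)$, and $H_0(\cu,\Omega)\hookrightarrow L^2(\Omega)^3$ is not compact). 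I would circumvent this with the $\epsilon$-weighted Helmholtz decomposition
\[
H_0(\cu,\Omega)=\nabla H_0^1(\Omega)\oplus X_\epsilon,\qquad X_\epsilon:=\bigl\{u\in H_0(\cu,\Omega):\ \textstyle\int_\Omega\epsilon u\cdot\nabla\overline\varphi\,\dx=0\ \ \forall\,\varphi\in H_0^1(\Omega)\bigr\},
\]
which is $\mathcal B$-orthogonal (because $\nabla\times\nabla\varphi=0$ and, by symmetry of $\epsilon$, $X_\epsilon$ is $\epsilon$-orthogonal to gradients). On $\nabla H_0^1(\Omega)$ one has $\mathcal B(\nabla p,\nabla q)=-k^2\int_\Omega\epsilon\nabla p\cdot\nabla\overline q\,\dx$, a nonzero scalar multiple of a coercive form (Poincaré), hence an isomorphism for every $k\neq0$. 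On $X_\epsilon$ the key input is the Weber--Weck--Picard-type compactness theorem on bounded Lipschitz domains, giving that $X_\epsilon\hookrightarrow L^2(\Omega)^3$ is compact; together with triviality of the Dirichlet fields (automatic when $\Omega$ is simply connected, and otherwise handled by carrying along a fixed finite-dimensional complement) this yields a Friedrichs inequality $\norm{u}_{L^2(\Omega)}\le C\norm{\nabla\times u}_{L^2(\Omega)}$ on $X_\epsilon$, so that $(u,v)\mapsto\int_\Omega\mu^{-1}(\nabla\times u)\cdot(\nabla\times\overline v)\,\dx$ is coercive on $X_\epsilon$ while $(u,v)\mapsto k^2\int_\Omega\epsilon u\cdot\overline v\,\dx$ induces a compact operator $X_\epsilon\to X_\epsilon^*$. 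Hence $\mathcal A_k$ equals an isomorphism plus a compact operator, i.e.\ it is Fredholm of index zero.

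Part (c) is then the Fredholm alternative together with the open mapping theorem: $\mathcal A_k$ is bijective with continuous inverse if and only if it is injective, i.e.\ exactly when the homogeneous system has only the trivial solution — which is the non-resonance hypothesis — and tracking the continuous dependence of $\ell$ on $(J,K,f)$ and of $H$ on $(E,K)$ gives the stated continuity of the solution map. For (b) I would apply the analytic Fredholm theorem blockwise: the gradient block of $\mathcal A_k$ is invertible for every $k\neq0$, while on $X_\epsilon$ one has $\mathcal A_k|_{X_\epsilon}=\mathcal C-k^2\mathcal K$ with $\mathcal C$ a $k$-independent isomorphism and $\mathcal K$ compact, so $\mathcal A_k|_{X_\epsilon}=\mathcal C(\mathrm{Id}-k^2\mathcal C^{-1}\mathcal K)$ is a holomorphic-in-$k$ family of the form identity-plus-compact which is invertible at $k=0$; the analytic Fredholm theorem therefore makes its set of non-invertibility discrete in $\C$, so the resonance frequencies form a discrete subset of $(0,\infty)$. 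I expect the only genuine obstacle to be the compact embedding $X_\epsilon\hookrightarrow L^2(\Omega)^3$ on a merely Lipschitz domain; everything else is routine Lax--Milgram, Fredholm alternative and analytic Fredholm bookkeeping, and in the isotropic case this is all classical (cf.\ \cite{M2003book}).
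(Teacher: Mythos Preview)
Your proposal is correct and rests on the same compact-embedding core as the paper, but the reduction to that core is organized differently. For part (a) you do exactly what the paper does (its Lemma~\ref{lemma:Maxwell_variational_H0curl}). For (b)--(c), after lifting $f$ via $\gamma_t^{-1}$, you Helmholtz-decompose the test and trial space as $H_0(\cu,\Omega)=\nabla H_0^1(\Omega)\oplus X_\epsilon$ and handle the two $\mathcal B$-orthogonal blocks separately; the paper instead keeps the full space but \emph{gauges the data}: it subtracts $ik\epsilon\nabla\psi$ from $J$ (with $\psi\in H_0^1(\Omega)$ solving a scalar Dirichlet problem) so that the modified current is divergence-free, which forces the solution itself into your space $X_\epsilon=\mathcal H$, and then works variationally on $\mathcal H$ alone (Lemmas~\ref{lemma:Maxwell_variational_H} and \ref{lemma:Maxwell_solution_H}). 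Both routes ultimately invoke the same compact embedding $\mathcal H\hookrightarrow L^2(\Omega)^3$ --- you cite Weber--Weck--Picard, the paper cites an $H^1$-type estimate of Shen --- and both finish with the analytic Fredholm theorem, though you take $k=0$ as the invertible base point on $X_\epsilon$ while the paper takes $k=i$ (making $\mathcal A+\mathcal K(i)$ coercive). Your block-decomposition is arguably more transparent structurally; the paper's gauge trick avoids having to discuss the gradient block at all and keeps the variational problem posed on a single space.
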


The proof of Theorem \ref{thm:Maxwell_well_posedness} follows standard arguments. Since we did not find a reference for precisely this setting, 
we give a proof for the sake of completeness. For the proof we will use the following lemmas.

\begin{lem}\label{lemma:Maxwell_variational_H0curl}
$(E,H)\in L^2(\Omega)^3\times L^2(\Omega)^3$ solve \eqref{eq:Maxwell_inhom1}--\eqref{eq:Maxwell_inhom2}
if and only if $E\in H(\cu,\Omega)$ solves
\begin{equation}\label{eq:Maxwell_inhom_variational_Hcurl}
\mathcal B(E,F)= \int_\Omega ikJ\cdot \overline F\dx +  \int_\Omega  \left( \mu^{-1} K \right) \cdot \left(\nabla \times \overline F\right) \dx
\quad \text{for all $F\in H_0(\cu,\Omega)$,}
\end{equation}
and $H= -\dfrac{i}{k} \mu^{-1} (\nabla \times E - K)$.
\end{lem}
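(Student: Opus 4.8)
The plan is to prove the two implications by passing back and forth between the distributional first--order system \eqref{eq:Maxwell_inhom1}--\eqref{eq:Maxwell_inhom2} and the variational identity \eqref{eq:Maxwell_inhom_variational_Hcurl}, keeping careful track of the regularity of $H$ and of the factors of $ik$.

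First I would treat the direction ``$\Rightarrow$''. Assume $(E,H)\in L^2(\Omega)^3\times L^2(\Omega)^3$ solve \eqref{eq:Maxwell_inhom1}--\eqref{eq:Maxwell_inhom2}. From \eqref{eq:Maxwell_inhom1}, $\nabla\times E = K + ik\mu H \in L^2(\Omega)^3$ because $K\in L^2(\Omega)^3$ and $\mu\in L^\infty(\Omega,\R^{3\times 3})$, so $E\in H(\cu,\Omega)$; and solving \eqref{eq:Maxwell_inhom1} for $H$ gives exactly $H=-\tfrac{i}{k}\mu^{-1}(\nabla\times E-K)$. Likewise, \eqref{eq:Maxwell_inhom2} gives $\nabla\times H = J - ik\epsilon E\in L^2(\Omega)^3$, so $H\in H(\cu,\Omega)$ as well. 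Now fix $F\in H_0(\cu,\Omega)$, pair \eqref{eq:Maxwell_inhom2} with $\overline F$, integrate over $\Omega$, and apply the integration by parts formula \eqref{eq:int_by_parts_tangential} with $H$ in place of $E$; the boundary term vanishes since $\nu\times F|_{\partial\Omega}=0$ forces $\nu\times(F\times\nu)|_{\partial\Omega}=0$. This yields $\int_\Omega H\cdot(\nabla\times\overline F)\dx + ik\int_\Omega \epsilon E\cdot\overline F\dx = \int_\Omega J\cdot\overline F\dx$. Substituting $H=-\tfrac{i}{k}\mu^{-1}(\nabla\times E-K)$ and multiplying through by $ik$ then produces precisely \eqref{eq:Maxwell_inhom_variational_Hcurl}.

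For the converse ``$\Leftarrow$'', assume $E\in H(\cu,\Omega)$ satisfies \eqref{eq:Maxwell_inhom_variational_Hcurl} for all $F\in H_0(\cu,\Omega)$ and set $H:=-\tfrac{i}{k}\mu^{-1}(\nabla\times E-K)$. Then $H\in L^2(\Omega)^3$ since $\mu^{-1}\in L^\infty$, and rearranging the definition gives $\nabla\times E - ik\mu H = K$ pointwise a.e., i.e.\ \eqref{eq:Maxwell_inhom1}. To recover \eqref{eq:Maxwell_inhom2}, I would restrict the test functions in \eqref{eq:Maxwell_inhom_variational_Hcurl} to $F\in C_0^\infty(\Omega)^3\subset H_0(\cu,\Omega)$, use the identity $\mu^{-1}(\nabla\times E-K)=ikH$ to rewrite the left-hand side, and divide by $ik$, arriving at $\int_\Omega H\cdot(\nabla\times\overline F)\dx = \int_\Omega (J-ik\epsilon E)\cdot\overline F\dx$ for all $F\in C_0^\infty(\Omega)^3$. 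Since $C_0^\infty(\Omega)^3$ is closed under complex conjugation, this is exactly the statement that $\nabla\times H = J-ik\epsilon E$ in the sense of distributions; as the right-hand side lies in $L^2(\Omega)^3$, we conclude $H\in H(\cu,\Omega)$ and \eqref{eq:Maxwell_inhom2} holds.

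I do not expect a serious conceptual obstacle here: the argument is essentially bookkeeping. The two points that require care are (i) extracting the curl-regularity of $H$ from \eqref{eq:Maxwell_inhom2} so that the integration by parts formula \eqref{eq:int_by_parts_tangential} is legitimately applicable in the forward direction (and noting $\gamma_t F=0\Rightarrow\gamma_T F=0$ so that the boundary term drops), and (ii) correctly handling the test-function spaces, using $C_0^\infty(\Omega)^3$ in the converse direction to read off the distributional curl. The density of $C_0^\infty(\Omega)^3$ in $H_0(\cu,\Omega)$ recalled in Section~\ref{subsect:spaces_traces} guarantees these two formulations are genuinely equivalent.
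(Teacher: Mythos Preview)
Your proof is correct and follows essentially the same route as the paper. The only cosmetic difference is that the paper first combines \eqref{eq:Maxwell_inhom1}--\eqref{eq:Maxwell_inhom2} into the second--order curl--curl identity $\nabla\times(\mu^{-1}(\nabla\times E-K))-k^2\epsilon E=ikJ$ and integrates that by parts, whereas you keep the first--order system and integrate \eqref{eq:Maxwell_inhom2} by parts before substituting for $H$; the underlying computation and the use of $C_0^\infty(\Omega)^3$ test functions in the converse direction are identical. Your remark that $\gamma_t F=0$ forces $\gamma_T F=0$ is correct (e.g.\ by the density of $C_0^\infty(\Omega)^3$ in $H_0(\cu,\Omega)$ and continuity of $\gamma_T$), and alternatively one can avoid it by applying \eqref{eq:int_by_parts_tangential} with the roles of the two entries swapped so that $\gamma_t \overline F=0$ kills the boundary term directly.
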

\begin{proof}
Let $(E,H)\in L^2(\Omega)^3\times L^2(\Omega)^3$ solve \eqref{eq:Maxwell_inhom1}--\eqref{eq:Maxwell_inhom2}. Then \eqref{eq:Maxwell_inhom1}
implies that
\[
E\in H(\cu,\Omega)\quad \text{ and } \quad H= -\frac{i}{k} \mu^{-1} (\nabla \times E - K),
\]
and combining \eqref{eq:Maxwell_inhom1} and \eqref{eq:Maxwell_inhom2} we obtain 
\begin{equation}
\label{eq:Maxwell_variational_curlcurl}
\nabla \times \left( \mu^{-1} \left( \nabla \times E - K\right) \right) -   k^2 \epsilon E=  ik J,
\end{equation}
which also shows that $\mu^{-1} \left( \nabla \times E - K\right)\in H(\mathrm{curl};\Omega)$. Using \eqref{eq:Maxwell_variational_curlcurl} and the integration by parts formula \eqref{eq:int_by_parts_tangential}, it follows that for all $F\in H_0(\mathrm{curl};\Omega)$
\begin{align*}
\int_\Omega i k J \cdot \overline F \dx &= \int_\Omega \left(\nabla \times \left( \frac{1}{\mu} \left( \nabla \times E - K\right) \right)\right)
\cdot \overline F \dx - \int_\Omega k^2 \epsilon E \cdot \overline F \dx\\
&= \int_\Omega \left( \mu^{-1} \left( \nabla \times E - K\right) \right)
\cdot \left(\nabla \times \overline F\right) \dx - \int_\Omega k^2 \epsilon E \cdot \overline F \dx,
\end{align*}
and thus \eqref{eq:Maxwell_inhom_variational_Hcurl} holds.

On the other hand, if $E\in H(\cu,\Omega)$ fulfills \eqref{eq:Maxwell_inhom_variational_Hcurl} for all $F\in H_0(\cu,\Omega)$, then 
this also holds for all $F\in C_0^\infty(\Omega)^3$ which (by the definition of distributional derivatives) shows that 
\begin{align*}
\nabla \times \left( \mu^{-1} \nabla \times E \right) - k^2 \epsilon E =  i k J + \nabla \times \left( \mu^{-1} K \right),
\end{align*}
and thus
\begin{align*}
\frac{1}{ik}\nabla \times \left( \mu^{-1} (\nabla \times E - K) \right) + i k \epsilon E=  J.
\end{align*}
Defining $H:= \dfrac{1}{ik} \mu^{-1}(\nabla \times E - K)$ it follows that $H\in H(\mathrm{curl};\Omega)$ and
that $E$ and $H$ solve \eqref{eq:Maxwell_inhom1}--\eqref{eq:Maxwell_inhom2}.
\end{proof}

If $H(\cu,\Omega)$ was compactly embedded in $L^2(\Omega)$, then Theorem \ref{thm:Maxwell_well_posedness} would immediately 
follow from Lemma~\ref{lemma:Maxwell_variational_H0curl} by Fredholm theory arguments. Unfortunately this is not the case, so that we require
an additional variational formulation on the space
\begin{align*}
\mathcal{H}&:=\{ E\in L^{2}(\Omega)^3:\ \nabla\times E\in L^{2}(\Omega)^3,
\ \nabla \cdot (\epsilon E)=0,\ \nu \times E|_{\partial \Omega}=0\},
\end{align*}
which is compactly embedded in $L^2(\Omega)^3$, by using the following inequality 
\begin{align*}
\|\nabla E\|_{L^2(\Omega)}\leq C \left\{\|\nabla \times E\|_{L^2(\Omega)}+\|\nabla \cdot (\epsilon E)\|_{L^2(\Omega)}+\|\nu\times E\|_{H^{1/2}(\partial \Omega)}\right\},
\end{align*}
for some positive constant $C$ depending only on $\epsilon $ and $\Omega$ (for example, see \cite[Theorem 2.3]{shen2014lp}).
We will now first consider the Maxwell system with homogeneous boundary data and divergence free electric currents, so that the solution 
lies in $\mathcal H$. After that we will show that the general Maxwell system can always be transformed (or gauged) to fulfill this condition.

\begin{lem}\label{lemma:Maxwell_variational_H}
For $f=0$ and $\nabla\cdot J=0$, 
$(E,H)\in L^2(\Omega)^3\times L^2(\Omega)^3$ solves \eqref{eq:Maxwell_inhom1}--\eqref{eq:Maxwell_inhom3}
if and only if $E\in \mathcal{H}$ solves
\begin{equation}
\label{eq:Maxwell_inhom_variational_H}
\mathcal B(E,F)= \int_\Omega ikJ\cdot \overline F\dx +  \int_\Omega  \left( \mu^{-1} K \right) \cdot \left(\nabla \times \overline F\right) \dx
\quad \text{for all $F\in \mathcal H$,}
\end{equation}
and $H= -\dfrac{i}{k} \mu^{-1} (\nabla \times E - K)$.
\end{lem}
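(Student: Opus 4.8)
The plan is to deduce this from Lemma \ref{lemma:Maxwell_variational_H0curl} by checking that, under the extra hypotheses $f=0$ and $\nabla\cdot J=0$, the solution automatically lands in the smaller space $\mathcal H$, and that it suffices to test against $F\in\mathcal H$ rather than all of $H_0(\cu,\Omega)$. First I would treat the forward direction: suppose $(E,H)\in L^2(\Omega)^3\times L^2(\Omega)^3$ solves \eqref{eq:Maxwell_inhom1}--\eqref{eq:Maxwell_inhom3} with $f=0$ and $\nabla\cdot J=0$. By Lemma \ref{lemma:Maxwell_variational_H0curl}, $E\in H(\cu,\Omega)$, and since $\nu\times E|_{\partial\Omega}=f=0$ we have $E\in H_0(\cu,\Omega)$. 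Taking the divergence of \eqref{eq:Maxwell_inhom2} in the distributional sense gives $\nabla\cdot(ik\epsilon E)=\nabla\cdot J - \nabla\cdot(\nabla\times H)=0$, because the divergence of a curl vanishes and $\nabla\cdot J=0$; hence $\nabla\cdot(\epsilon E)=0$. Therefore $E\in\mathcal H$, and \eqref{eq:Maxwell_inhom_variational_Hcurl} restricted to the subspace $\mathcal H\subseteq H_0(\cu,\Omega)$ yields \eqref{eq:Maxwell_inhom_variational_H}, with the stated formula for $H$ carried over verbatim.

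For the converse, suppose $E\in\mathcal H$ solves \eqref{eq:Maxwell_inhom_variational_H} for all $F\in\mathcal H$, and set $H:=-\frac{i}{k}\mu^{-1}(\nabla\times E-K)$. The task is to upgrade the variational identity from test functions in $\mathcal H$ to all test functions in $H_0(\cu,\Omega)$, after which Lemma \ref{lemma:Maxwell_variational_H0curl} finishes the job and the boundary condition $\nu\times E|_{\partial\Omega}=0$ is automatic from $E\in\mathcal H\subseteq H_0(\cu,\Omega)$. The standard device here is the Helmholtz-type decomposition of $H_0(\cu,\Omega)$ adapted to the weight $\epsilon$: every $F\in H_0(\cu,\Omega)$ can be written as $F=F_0+\nabla\varphi$ with $F_0\in\mathcal H$ and $\varphi\in H_0^1(\Omega)$, where $\varphi$ solves the weighted elliptic problem $\nabla\cdot(\epsilon\nabla\varphi)=\nabla\cdot(\epsilon F)$ in $\Omega$, $\varphi|_{\partial\Omega}=0$ (well-posed by the Lax--Milgram theorem using \eqref{Positive lower bound}); one checks $\nabla\varphi\in H_0(\cu,\Omega)$ since $\nabla\times\nabla\varphi=0$ and $\nu\times\nabla\varphi|_{\partial\Omega}=0$ as $\varphi$ is constant on $\partial\Omega$, and then $F_0:=F-\nabla\varphi$ lies in $\mathcal H$. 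It then remains to verify that both sides of \eqref{eq:Maxwell_inhom_variational_Hcurl} vanish when $F=\nabla\varphi$: on the left, $\mathcal B(E,\nabla\varphi)=-\int_\Omega k^2\epsilon E\cdot\nabla\overline\varphi\dx$ since $\nabla\times\nabla\varphi=0$, and this equals $k^2\int_\Omega(\nabla\cdot(\epsilon E))\overline\varphi\dx=0$ by integration by parts and $\nabla\cdot(\epsilon E)=0$; on the right, $\int_\Omega ikJ\cdot\nabla\overline\varphi\dx=-ik\int_\Omega(\nabla\cdot J)\overline\varphi\dx=0$ by $\nabla\cdot J=0$, and $\int_\Omega(\mu^{-1}K)\cdot(\nabla\times\nabla\overline\varphi)\dx=0$ trivially. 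Hence \eqref{eq:Maxwell_inhom_variational_Hcurl} holds for all $F\in H_0(\cu,\Omega)$, and Lemma \ref{lemma:Maxwell_variational_H0curl} gives the claim.

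The main obstacle is the converse direction, specifically establishing the weighted Helmholtz decomposition $H_0(\cu,\Omega)=\mathcal H\oplus\nabla H_0^1(\Omega)$ with enough care that $\nabla\varphi$ genuinely belongs to $H_0(\cu,\Omega)$ on a Lipschitz domain — in particular that the trace $\nu\times\nabla\varphi|_{\partial\Omega}$ vanishes, which uses that $\varphi\in H_0^1(\Omega)$ and standard trace identifications of $H(\cu,\Omega)$. The remaining integration-by-parts manipulations are routine once \eqref{eq:int_by_parts_tangential} and the density of $C_0^\infty(\Omega)^3$ in $H_0(\cu,\Omega)$ are invoked; I would streamline them by noting that all the relevant identities are continuous in the appropriate norms and hence follow from the smooth case.
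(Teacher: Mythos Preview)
Your proposal is correct and follows essentially the same route as the paper: both directions are reduced to Lemma~\ref{lemma:Maxwell_variational_H0curl}, with the converse handled via the $\epsilon$-weighted Helmholtz splitting $H_0(\cu,\Omega)=\mathcal H+\nabla H_0^1(\Omega)$ obtained by solving $\nabla\cdot(\epsilon\nabla\varphi)=\nabla\cdot(\epsilon F)$ and then checking that both sides of the variational identity annihilate gradients using $\nabla\cdot(\epsilon E)=0$, $\nabla\cdot J=0$, and $\varphi|_{\partial\Omega}=0$. Your write-up is in fact more careful than the paper's about why $E\in\mathcal H$ in the forward direction and why $\nabla\varphi\in H_0(\cu,\Omega)$, but the argument is the same.
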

\begin{proof}
If $(E,H)\in L^2(\Omega)^3\times L^2(\Omega)^3$ fulfill \eqref{eq:Maxwell_inhom1}--\eqref{eq:Maxwell_inhom3},
then clearly $E\in \mathcal{H}$ and Lemma \ref{lemma:Maxwell_variational_H0curl} shows that 
\eqref{eq:Maxwell_inhom_variational_H} is fulfilled for all $F\in \mathcal H \subset H_0(\cu,\Omega)$.

To prove the other direction, let $E\in \mathcal{H}$ fulfill \eqref{eq:Maxwell_inhom_variational_H} for all $F\in \mathcal H$. Given $\Phi\in H_0(\cu,\Omega)$, there exists a solution $\varphi\in H_0^1(\Omega)$ of 
\[
\nabla\cdot (\epsilon \nabla \varphi)=- \nabla\cdot (\epsilon\Phi)
\]
and thus $F:=\Phi+\nabla \varphi\in \mathcal H$. Using \eqref{eq:Maxwell_inhom_variational_H} it follows that for
\begin{align*}
\mathcal B (E,\Phi)&= \mathcal B (E,F) -\mathcal B (E,\nabla \varphi)\\
&=  \int_\Omega ikJ\cdot\overline F\dx +  \int_\Omega  \left( \mu^{-1} K \right) \cdot \left(\nabla \times \overline F\right) \dx  + \int_\Omega  k^2 \epsilon E \cdot \nabla \overline \varphi \dx\\
&= \int_\Omega ikJ\cdot \overline\Phi\dx +  \int_\Omega  \left( \mu^{-1} K \right) \cdot \left(\nabla \times \overline \Phi\right) \dx,
\end{align*} 
where we used $\nabla \times (\nabla \varphi)=0$, $\nabla\cdot J =0$, $\nabla\cdot (\epsilon E)=0$ and $\varphi|_{\partial \Omega}=0$.
\end{proof}

We recall that we call $k>0$ a resonance frequency, if the homogeneous Maxwell system \eqref{eq:Maxwell_inhom1}--\eqref{eq:Maxwell_inhom3} with $J=0$, $K=0$ and $f=0$ admits a non-trivial solution. 

\begin{lem}\label{lemma:Maxwell_solution_H}
If $k>0$ is not a resonance frequency, then for every
$J,K\in L^{2}(\Omega)^3$ with $\nabla \cdot J=0$ and $f=0$, there exists a unique solution $(E,H)\in L^2(\Omega)^3\times L^2(\Omega)^3$ of \eqref{eq:Maxwell_inhom1}--\eqref{eq:Maxwell_inhom3}, and the solution depends continuously on $J,K\in L^2(\Omega)^3$. Moreover, the set of resonance frequencies is discrete. 
\end{lem}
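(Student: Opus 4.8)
The plan is to recast the variational problem \eqref{eq:Maxwell_inhom_variational_H} of Lemma~\ref{lemma:Maxwell_variational_H} as a Fredholm equation on the Hilbert space $\mathcal H$ and to apply the analytic Fredholm theorem; the whole point of working on $\mathcal H$ rather than $H(\cu,\Omega)$ is that the former embeds \emph{compactly} into $L^2(\Omega)^3$. First I would establish that the principal part
\[
a_0(E,F):=\int_\Omega\left(\mu^{-1}\,\nabla\times E\right)\cdot\left(\nabla\times\overline F\right)\dx,\qquad E,F\in\mathcal H,
\]
is bounded and coercive on $\mathcal H$. Boundedness is immediate from $\mu^{-1}\in L^\infty(\Omega,\R^{3\times3})$. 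For coercivity, I would note that any $E\in\mathcal H$ satisfies $\nabla\cdot(\epsilon E)=0$ and $\nu\times E|_{\partial\Omega}=0$, so the div--curl regularity inequality quoted just before this lemma gives $\|\nabla E\|_{L^2(\Omega)}\le C\|\nabla\times E\|_{L^2(\Omega)}$; since, by the topological properties of $\Omega$, the only $E\in\mathcal H$ with $\nabla\times E=0$ is $E=0$, a standard compactness argument upgrades this to $\|E\|_{L^2(\Omega)}\le C\|\nabla\times E\|_{L^2(\Omega)}$ as well. Thus on $\mathcal H$ the norm $\|\cdot\|_{H(\cu,\Omega)}$ is equivalent to $\|\nabla\times\,\cdot\,\|_{L^2(\Omega)}$, and the strong ellipticity of $\mu$ yields $a_0(E,E)\ge c\,\|E\|_{H(\cu,\Omega)}^2$. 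By Lax--Milgram, $a_0$ induces an isomorphism $A_0\colon\mathcal H\to\mathcal H^*$.

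Next I would write $\mathcal B(E,F)=a_0(E,F)-k^2\int_\Omega\epsilon E\cdot\overline F\dx$, observe that the second term defines a bounded operator $K_\epsilon\colon\mathcal H\to\mathcal H^*$ which factors through the inclusion $\mathcal H\hookrightarrow L^2(\Omega)^3$ and is therefore compact, and observe that the right-hand side $F\mapsto\int_\Omega ikJ\cdot\overline F\dx+\int_\Omega(\mu^{-1}K)\cdot(\nabla\times\overline F)\dx$ defines $\ell\in\mathcal H^*$ with $\|\ell\|_{\mathcal H^*}\le C\left(\|J\|_{L^2(\Omega)}+\|K\|_{L^2(\Omega)}\right)$. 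Then \eqref{eq:Maxwell_inhom_variational_H} becomes $(A_0-k^2K_\epsilon)E=\ell$, equivalently $(I-k^2A_0^{-1}K_\epsilon)E=A_0^{-1}\ell$ with $A_0^{-1}K_\epsilon$ compact. By the Fredholm alternative, for a non-resonant $k$ the operator $I-k^2A_0^{-1}K_\epsilon$ is injective --- which by Lemma~\ref{lemma:Maxwell_variational_H} is exactly the assumption that the homogeneous system admits only the trivial solution --- hence boundedly invertible, giving existence, uniqueness and $\|E\|_{\mathcal H}\le C\|\ell\|_{\mathcal H^*}\le C\left(\|J\|_{L^2(\Omega)}+\|K\|_{L^2(\Omega)}\right)$; together with $H=-\tfrac{i}{k}\mu^{-1}(\nabla\times E-K)$ this gives the asserted continuous dependence on $J$ and $K$.

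For the discreteness of the resonance frequencies I would use that $k\mapsto A_0-k^2K_\epsilon$ is a polynomial --- hence holomorphic --- family of Fredholm operators of index zero which is invertible for $k>0$ small (there $\mathcal B$ is coercive); the analytic Fredholm theorem then shows it is invertible for $k$ outside a discrete subset of $(0,\infty)$, and this subset is exactly the resonance set. Alternatively, using the symmetry of $\epsilon$ and $\mu$, one checks that $A_0^{-1}K_\epsilon$ is compact and self-adjoint for the inner product $a_0(\cdot,\cdot)$, so its nonzero eigenvalues are real and accumulate only at $0$, and $k$ is a resonance iff $1/k^2$ is such an eigenvalue, whence discreteness in $(0,\infty)$.

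I expect the coercivity step to be the only real obstacle: since $H(\cu,\Omega)$ is \emph{not} compactly embedded in $L^2(\Omega)^3$, one genuinely needs to descend to $\mathcal H$ and combine the div--curl regularity estimate (the cited result) with the topological properties of $\Omega$ to obtain it; once the equation is posed on $\mathcal H$ with a compact embedding in hand, the remaining Fredholm-theoretic steps are routine.
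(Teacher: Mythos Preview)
Your proposal is correct and follows the same overall strategy as the paper---reduce via Lemma~\ref{lemma:Maxwell_variational_H} to a Fredholm problem on $\mathcal H$, exploit the compact embedding $\mathcal H\hookrightarrow L^2(\Omega)^3$, and finish with the analytic Fredholm theorem---but you choose a different splitting of the bilinear form. The paper writes $\mathcal B=\langle\mathcal A\,\cdot,\cdot\rangle+\langle\mathcal K(k)\,\cdot,\cdot\rangle$ with
\[
\langle\mathcal A E,F\rangle=\int_\Omega(\mu^{-1}\nabla\times E)\cdot(\nabla\times F)\dx+\int_\Omega E\cdot F\dx,\qquad
\langle\mathcal K(k)E,F\rangle=-\int_\Omega(1+k^2\epsilon)E\cdot F\dx,
\]
so that $\mathcal A$ is \emph{trivially} coercive on $\mathcal H$ and no Friedrichs-type inequality is needed; the compact embedding alone makes $\mathcal K(k)$ compact, and invertibility at the reference point $\widehat k=i$ (where $\mathcal A+\mathcal K(i)$ is again coercive) triggers analytic Fredholm. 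Your route instead keeps the pure curl form $a_0$ as principal part and must therefore establish the Poincar\'e-type bound $\|E\|_{L^2}\le C\|\nabla\times E\|_{L^2}$ on $\mathcal H$; this is where you genuinely use that $\Omega$ is simply connected with connected boundary (to rule out nontrivial curl-free elements of $\mathcal H$), an ingredient the paper's splitting sidesteps entirely. The payoff of your choice is a cleaner spectral picture---$A_0^{-1}K_\epsilon$ is compact, self-adjoint and nonnegative in the $a_0$-inner product, so discreteness of resonances follows directly from the spectral theorem without complexifying $k$. Both arguments are sound; the paper's is a little shorter, yours a little more informative about the resonance spectrum.
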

\begin{proof}
Lemma \ref{lemma:Maxwell_variational_H} yields that $(E,H)\in L^2(\Omega)\times L^2(\Omega)$ solves \eqref{eq:Maxwell_inhom1}--\eqref{eq:Maxwell_inhom3}
if and only if
\[
\left(\mathcal A + \mathcal K(k) \right) E = l,
\]
where $\mathcal A, \mathcal K(k):\ \mathcal H\to \mathcal H^*$ are defined by 
\begin{alignat*}{2}
&\langle \mathcal A E, F\rangle  :=\int_\Omega \left( \mu^{-1}  \nabla \times E \right)
\cdot \left(\nabla \times F\right) \dx + \int_\Omega E\cdot F \dx,
\quad && \text{ for all } E,F\in \mathcal H,\\
&\langle \mathcal K(k) E, F\rangle  := - \int_\Omega (1+k^2\epsilon) E\cdot  F \dx,
\quad && \text{ for all } E,F\in \mathcal H,
\end{alignat*}
and $l\in \mathcal H^*$ is defined by 
\[
\langle l,F\rangle=\int_\Omega ikJ\cdot F\dx +  \int_\Omega  \left( \mu^{-1} K \right) \cdot \left(\nabla \times   F\right) \dx,
\quad  \text{ for all } F\in \mathcal H,
\]
where $\mathcal H^*$ is the dual space of $\mathcal H$.

Then $\mathcal A$ is a coercive linear bounded operator and thus continuously invertible due to the Lax-Milgram theorem.
For every $k\in \C$, $\mathcal K(k):\ \mathcal H\to \mathcal H^*$ is a linear compact operator due to the compact imbedding of $\mathcal H$ into $L^2(\Omega)^3$. $l\in \mathcal H^*$ depends linearly and continously on $J,K\in L^2(\Omega)^3$.
It thus follows from the Fredholm alternative, that $\mathcal A + \mathcal K(k)$ is continuously invertible if 
it is injective, i.e., if $k>0$ is not a resonance frequency.

Moreover, $\mathcal K(k)$ depends analytically on $k$, and for $\widehat k:=i$, $\mathcal A + \mathcal K(\widehat k)$ is coercive and thus continuously invertible. Hence, it follows from the analytic Fredholm theorem, that the set of resonances is discrete.
\end{proof}

Now we extend this result to non-homogeneous boundary data $f$ and non-divergence-free currents $J$ and prove theorem \ref{thm:Maxwell_well_posedness}.

\emph{Proof of Theorem \ref{thm:Maxwell_well_posedness}.}
(a) follows from Lemma \ref{lemma:Maxwell_variational_H0curl}. (b) and the uniqueness of the solution of the Maxwell system is proven in Lemma~\ref{lemma:Maxwell_solution_H}. To prove existence of the solution, let $J,K\in L^{2}(\Omega)^3$ and $f\in H^{-1/2}(\mathrm{div}_{\partial \Omega},\partial \Omega )$. We define $E_f=\gamma_t^{-1}f\in H(\cu,\Omega)$, i.e., $\nu \times E_f|_{\partial \Omega}=f$ and $E_f$ depends continuously and linear on $f$. Moreover let $\psi\in H_0^1(\Omega)$ solve
\[
\nabla \cdot (ik\epsilon \nabla \psi)= \nabla \cdot (J-ik\epsilon E_f),
\]
which also depends continuously and linearly on $J$ and $E_f$.

It follows from Lemma~\ref{lemma:Maxwell_solution_H} that there exists a solution $(E_0,H)\in L^2(\Omega)^3\times L^2(\Omega)^3$ of the gauged system
\begin{equation*}
\begin{cases}
\nabla\times E_0-ik\mu H=K-\nabla\times E_f  &  \mbox{ in }\Omega,\\
\label{eq:Maxwell_inhom2_gauged} \nabla\times H+ik\epsilon E_0=J-ik\epsilon E_f -ik\epsilon \nabla \psi  & \mbox{ in }\Omega,\\
\nu\times E_0|_{\partial \Omega}=0,
\end{cases}
\end{equation*}
and $E_0$ and $H$ depends linearly and continuously on $K-\nabla\times E_f\in L^2(\Omega)^3$ and $J-ik\epsilon E_f -ik\epsilon \nabla \psi\in L^2(\Omega)^3$. Hence, $E:=E_0+E_f+\nabla \psi$ and $H$ solve \eqref{eq:Maxwell_inhom1}--\eqref{eq:Maxwell_inhom3} and depend linearly and continuously on $J,K\in L^{2}(\Omega)^3$ and $f\in H^{-1/2}(\mathrm{div}_{\partial \Omega},\partial \Omega )$.
\hfill $\Box$

\subsection{Unique continuation}

The \textit{unique continuation property} (UCP) is an important property
to study the localized fields for differential equations. The
UCP for the anisotropic Maxwell system was studied by \cite{leis2013initial,nguyen2012quantitative}, which can help us to construct localized electromagnetic fields.
\begin{defn}
We say that $(\epsilon,\mu)$ satisfies the UCP in $\Omega$ if $(E,H)\in H(\mathrm{curl};\Omega)\times H(\mathrm{curl};\Omega)$
is a solution of 
\begin{equation}
\begin{cases}
\nabla\times E-ik\mu H=0 & \mbox{ in }\Omega,\\
\nabla\times H+ik\epsilon E=0 & \mbox{ in }\Omega,
\end{cases}\label{Maxwell without boundary}
\end{equation}
which vanishes in a nonempty open set $D$ in $\Omega$,
then $(E,H)\equiv(0,0)$ in $\Omega$. \end{defn}

\begin{thm}[Unique continuation property]\label{Theorem UCP}
The following properties hold: 
\begin{itemize} 
\item [(a)] Let $\epsilon,\mu\in L^\infty(\Omega,\R^{3\times 3})$ be piecewise Lipschitz continuous matrix-valued functions in $\Omega$ satisfying \eqref{Positive lower bound}. Suppose
that $(E,H)\in L^2(\Omega)^3\times L^2(\Omega)^3$ solves \eqref{Maxwell without boundary}. If $(E,H)=(0,0)$
in some nonempty open subset $D\subset\Omega$, we have $(E,H)=(0,0)$
in $\Omega$.

\item [(b)] Let $\mathcal{F}$ be a closed set in $\Omega$ such
that $\Omega\setminus \mathcal{F}$ is connected to a relatively open boundary part $\Gamma\subseteq\partial\Omega$.
Let $(E,H)\in H(\mathrm{curl};\Omega)\times H(\mathrm{curl};\Omega)$
solve 
\[
\begin{cases}
\nabla\times E-ik\mu H=0 & \mbox{ in }\Omega\backslash\mathcal{F},\\
\nabla\times H+ik\epsilon E=0 & \mbox{ in }\Omega\backslash\mathcal{F}.
\end{cases}
\]
If $\nu\times E|_\Gamma=\nu\times H|_\Gamma=0$ on $\Gamma$, then $(E,H)= (0,0)$
in $\Omega\backslash\mathcal{F}$.
\end{itemize}
\end{thm}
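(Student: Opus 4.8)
The plan is to reduce the first-order Maxwell system to a second-order equation for which scalar unique continuation results are already available in the literature, and then bootstrap back to the pair $(E,H)$. I would first treat part (a). Suppose $(E,H)\in L^2(\Omega)^3\times L^2(\Omega)^3$ solves \eqref{Maxwell without boundary} and vanishes on a nonempty open $D\subset\Omega$. From the two equations one gets $E,H\in H(\cu,\Omega)$ and, eliminating $H$, the second-order equation $\nabla\times(\mu^{-1}\nabla\times E)-k^2\epsilon E=0$ in $\Omega$, together with $E=0$ in $D$. On each subdomain where $\epsilon,\mu$ are Lipschitz this is a second-order elliptic system of Maxwell type; I would invoke the unique continuation theorems of \cite{leis2013initial,nguyen2012quantitative} (which are exactly for piecewise Lipschitz anisotropic coefficients and handle the interface conditions) to conclude $E\equiv 0$, first propagating the vanishing set through the Lipschitz piece containing $D$, then across interfaces, and finally — using that $\Omega$ is connected — to all of $\Omega$. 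Once $E\equiv 0$ in $\Omega$, the first Maxwell equation $\nabla\times E=ik\mu H$ forces $\mu H=0$, hence $H\equiv 0$ by the ellipticity bound \eqref{Positive lower bound} on $\mu$. This proves (a).

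For part (b), the hypotheses are that $(E,H)\in H(\cu;\Omega)\times H(\cu;\Omega)$ solves the homogeneous Maxwell system in the connected open set $\Omega\setminus\mathcal F$ and has vanishing tangential traces $\nu\times E|_\Gamma=\nu\times H|_\Gamma=0$ on a relatively open $\Gamma\subseteq\partial\Omega$ with $\Gamma\subset\partial(\Omega\setminus\mathcal F)$. The idea is to extend by zero across $\Gamma$: pick a ball $B$ centred at a point of $\Gamma$ and set $\widetilde E=E$, $\widetilde H=H$ in $B\cap(\Omega\setminus\mathcal F)$ and $\widetilde E=\widetilde H=0$ in $B\setminus\overline\Omega$. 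Because the tangential traces of $E$ and $H$ vanish on $\Gamma$, the weak-curl of the extensions picks up no surface contribution on $\Gamma$ (this is where the integration-by-parts formula \eqref{eq:int_by_parts_tangential} is used), so $\widetilde E,\widetilde H\in H(\cu;B)$ and they solve the homogeneous Maxwell system in all of $B$, while vanishing on the nonempty open set $B\setminus\overline\Omega$. Part (a) — applied in the ball $B$ with the (piecewise-Lipschitz) coefficients extended arbitrarily subject to \eqref{Positive lower bound} — then gives $(\widetilde E,\widetilde H)\equiv 0$ in $B$, hence $(E,H)\equiv 0$ in $B\cap(\Omega\setminus\mathcal F)$. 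Finally I would use the connectedness of $\Omega\setminus\mathcal F$ together with the interior UCP from (a) to propagate this vanishing throughout $\Omega\setminus\mathcal F$, concluding $(E,H)=(0,0)$ there.

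The main obstacle, and the step requiring the most care, is the reduction across the piecewise-Lipschitz interfaces in part (a): the cited scalar unique continuation results are local and the transmission conditions (continuity of $\nu\times E$ and of $\nu\times(\mu^{-1}\nabla\times E)$ across the interfaces, encoded in $E\in H(\cu)$ and $\mu^{-1}\nabla\times E\in H(\cu)$) must be used to chain the local results together; one also has to be sure that the form of the second-order operator $\nabla\times(\mu^{-1}\nabla\times\,\cdot\,)-k^2\epsilon$ fits the hypotheses under which \cite{leis2013initial,nguyen2012quantitative} prove UCP. A secondary technical point is justifying the zero-extension in part (b) at the level of $H(\cu)$ regularity near $\partial\Omega$, which again is a direct consequence of \eqref{eq:int_by_parts_tangential} but should be stated cleanly. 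Everything else — the algebraic elimination of $H$, the recovery of $H$ from $E\equiv 0$, and the connectedness argument — is routine.
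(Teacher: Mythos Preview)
Your approach matches the paper's: for (a) the paper likewise defers to the literature and omits the details, and for (b) it performs precisely your zero-extension across $\Gamma$ (into a single open neighbourhood $\mathcal O$ of $\Gamma$ with $\epsilon,\mu$ extended by the identity, rather than a small ball followed by interior propagation), verifies via \eqref{eq:int_by_parts_tangential} that the extensions lie in $H(\cu)$, and then invokes (a). One correction: the references \cite{leis2013initial,nguyen2012quantitative} cover only $C^2$ scalar and globally Lipschitz anisotropic coefficients respectively---the piecewise-Lipschitz case with interfaces that you actually need is due to Liu--Rondi--Xiao \cite{liu2016mosco}, which the paper cites explicitly.
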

\begin{proof}
(a) The UCP of the Maxwell system was proved by Leis  \cite{leis2013initial}
when $\epsilon,\mu$ are $C^2$ scalar functions. When $\epsilon,\mu$
are Lipschitz continuous anisotropic parameters, the  UCP was proved
by Nguyen-Wang \cite{nguyen2012quantitative}. In \cite{liu2016mosco}, Liu, Rondi and Xiao have shown that the UCP holds for piecewise Lipschitz continuous matrix-valued functions $\epsilon$ and $\mu$ (see \cite[Section 2]{liu2016mosco}). We omit the proof here.

(b) Let $\mathcal{O}$ be a nonempty open set in $\mathbb{R}^{3}$
such that $\mathcal{O}\cap\partial\Omega=\Gamma$ and $\mathcal{F}\subset\mathcal{O}$.
In the set $\Omega\cup\mathcal{O}$, we define 
\[
\widehat{\epsilon}:=\begin{cases}
\epsilon & \mbox{ in }\Omega\\
1 & \mbox{ in }\mathcal{O}\backslash\Omega
\end{cases}\mbox{ and }\widehat{\mu}=\begin{cases}
\mu & \mbox{ in }\Omega\\
1 & \mbox{ in }\mathcal{O}\backslash\Omega
\end{cases}.
\]
Since $\nu\times E=\nu\times H=0$ on $\Gamma$, we can extend $(E,H)$
by $(0,0)$ and define the extension functions 
\[
\widehat{E}:=\begin{cases}
E & \mbox{ in }\Omega\\
0 & \mbox{ in }\mathcal{O}\backslash\Omega
\end{cases}\mbox{ and }\widehat{H}:=\begin{cases}
H & \mbox{ in }\Omega\\
0 & \mbox{ in }\mathcal{O}\backslash\Omega
\end{cases}.
\]

First, we prove that $(\widehat{E},\widehat{H})\in H(\mathrm{curl};\Omega\cup\mathcal{O})\times H(\mathrm{curl};\Omega\cup\mathcal{O})$.
For any $\phi\in C_{c}^{\infty}(\Omega\cup\mathcal{O})$, we have
\begin{eqnarray*}
	\int_{\Omega\cup\mathcal{O}}\widehat{E}\cdot(\nabla\times\phi)dx & = & \int_{\Omega}E\cdot(\nabla\times\phi)dx\\
	& = & \int_{\Omega}\left(\nabla\times E\right)\cdot\phi dx+\int_{\partial\Omega} E\cdot(\nu\times\phi) dS\\
	& = & \int_{\Omega\cup \mathcal{O}}\left(\left(\nabla\times E\right)\chi_\Omega\right)\cdot\phi dx,
\end{eqnarray*}
where we have used $E\cdot(\nu\times\phi)=-\phi\cdot\left(\nu\times E\right)=0$
on $\Gamma$ and $\phi=0$ on $\partial\Omega\backslash\Gamma$. This shows that $\widehat{E} \in H(\mathrm{curl};\Omega\cup\mathcal{O})$
and that $\nabla \times \widehat{E}$ is the zero extension of $\nabla \times E$. The same holds for $\widehat{H}$, and thus it also follows that $(\widehat{E},\widehat{H})$ is a solution of 
\[
\begin{cases}
\nabla\times\widehat{E}-ik\widehat{\mu}\widehat{H}=0 & \mbox{ in }\left(\Omega\cup\mathcal{O}\right)\backslash\mathcal{F},\\
\nabla\times\widehat{H}+ik\widehat{\epsilon}\widehat{E}=0 & \mbox{ in }\left(\Omega\cup\mathcal{O}\right)\backslash\mathcal{F}.
\end{cases}
\]
Notice that $\widehat{\epsilon}$ and $\widehat{\mu}$ are piecewise Lipschitz continuous functions fulfilling the ellipticity condition \eqref{Positive lower bound} and recall that $\widehat{E}=\widehat{H}=0$ in $\mathcal{O}\backslash\overline{\Omega}$
(a nonempty open set). Then by using (a), the UCP gives $\widehat{E}\equiv\widehat{H}\equiv0$
in $\left(\Omega\cup\mathcal{O}\right)\backslash\mathcal{F}$ and
completes the proof.
\end{proof}

\section{Localized electromagnetic fields\label{Section Localized fields}}

We will now present the main result on localizing and concentrating electromagnetic field.
We show that there exists boundary data (supported on an arbitrarily small boundary part) 
which generates an electromagnetic field with arbitrarily high energy on one part of the considered domain and arbitrarily small
energy on another part. This extends the related results in \cite{gebauer2008localized}
for the conductivity equation and \cite{harrach2017monotonicity}
for the Helmholtz equation to the more practical and challenging Maxwell system. In this section, we
prove the existence of localized fields by using the functional analysis techniques from \cite{gebauer2008localized}. Recall our main result as follows.

\begin{thm}
\label{Main Theorem Localization} Let $\Omega\subset\mathbb{R}^{3}$ be a bounded Lipschitz domain and $\Gamma\subseteq\partial \Omega$ be
a relatively open piece of the boundary. Let $\epsilon,\mu\in L^\infty(\Omega,\R^{3\times 3})$ be real-valued, piecewise Lipschitz continuous functions satisfying \eqref{Positive lower bound} and $k\in\mathbb{R}_+$ be
a non-resonant wavenumber. Let $D\Subset\Omega$ be a closed set with connected complement $\Omega \setminus D$.
For every open set $M\subseteq \Omega$ with $M\not\subseteq D$ (see Figure \ref{fig:1} for the schematic illustration), 
there exists a sequence $\left\{ f^{(\ell)}\right\} _{\ell\in\mathbb{N}}\subset C_c^\infty(\Gamma)$
such that the electromagnetic fields fulfill
\begin{equation}\label{eq:localized}
\int_{M}\left(|E^{(\ell)}|^{2}+|H^{(\ell)}|^2\right)\dx\to\infty\quad \mbox{ and }\quad \int_{D}\left(|E^{(\ell)}|^{2}+|H^{(\ell)}|^2\right)\dx\to0\mbox{ as }\ell\to\infty,
\end{equation}
where, for $\ell\in\mathbb{N}$, $(E^{(\ell)},H^{(\ell)})\in L^2(\Omega)^3\times L^2(\Omega)^3$
is a solution of 
\begin{alignat}{2}
\label{eq:Maxwell_locpot_1} \nabla\times E^{(\ell)}-ik\mu H^{(\ell)}&=0  && \quad \mbox{ in }\Omega,\\
\label{eq:Maxwell_locpot_2} \nabla\times H^{(\ell)}+ik\epsilon E^{(\ell)}&=0  && \quad \mbox{ in }\Omega,
\end{alignat}
with boundary data
\begin{align}\label{eq:Maxwell_locpot_3}
\nu\times E^{(\ell)}|_{\partial \Omega}=\begin{cases}
f^{(\ell)} & \mbox{ on }\Gamma,\\
0 & \mbox{ otherwise.}
\end{cases}
\end{align}
\end{thm}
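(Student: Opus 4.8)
The plan is to follow the functional-analytic duality argument of \cite{gebauer2008localized}, adapted to the Maxwell setting. The central idea is a dichotomy: either the desired localized fields exist, or else one can bound the $L^2(M)$-energy of any solution by a constant times its $L^2(D)$-energy, and the latter bound will be shown to contradict the unique continuation property of Theorem~\ref{Theorem UCP}.

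\textbf{Step 1: A reformulation via a virtual measurement / source-to-solution operator.} For a closed set $A\in\{D,M\}$ (with $M$ replaced by a suitable open subset $M_0\Subset M$ with $M_0\not\subseteq D$, so that $M_0$ has nonempty interior disjoint from... no---rather $M_0$ is an open ball contained in $M$ but not in $D$), I would introduce the bounded linear operator that maps a boundary input $f\in H(\Gamma)$ to the restriction $(E|_A,H|_A)\in L^2(A)^3\times L^2(A)^3$ of the corresponding solution of \eqref{eq:Maxwell_locpot_1}--\eqref{eq:Maxwell_locpot_3}; call it $L_A:\ H(\Gamma)\to L^2(A)^6$. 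This is well-defined and bounded by Theorem~\ref{thm:Maxwell_well_posedness}(c). The existence of the claimed sequence $\{f^{(\ell)}\}$ is, by a standard lemma (cf.\ \cite{gebauer2008localized}, or the closed-range/open-mapping type argument), equivalent to the statement
\begin{equation*}
\mathcal R(L_M^*) \not\subseteq \mathcal R(L_D^*),
\end{equation*}
i.e.\ the range of the adjoint of $L_M$ is not contained in the range of the adjoint of $L_D$. Equivalently, by the functional-analytic characterization in \cite{gebauer2008localized} (the abstract lemma relating range inclusion of adjoints to norm estimates), the negation---that no such sequence exists---is equivalent to the existence of a constant $c>0$ with
\begin{equation*}
\norm{L_M f}_{L^2(M)^6}^2 \le c\, \norm{L_D f}_{L^2(D)^6}^2 \qquad \text{for all } f\in H(\Gamma).
\end{equation*}
(One must also handle the density of $C_c^\infty(\Gamma)$ in $H(\Gamma)$, which is immediate from the definition \eqref{abstract trace space}, so it suffices to work with $f\in H(\Gamma)$ and then approximate.)

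\textbf{Step 2: Deriving a contradiction from the energy estimate.} Suppose the estimate in Step~1 holds. I would argue that this forces: every solution $(E,H)$ of \eqref{eq:Maxwell_locpot_1}--\eqref{eq:Maxwell_locpot_2} on $\Omega$ (attainable by partial boundary data on $\Gamma$) that vanishes on $D$ must also vanish on $M$. Then pick a solution that vanishes on $D$ but is nonzero somewhere in $M$: such a solution is produced by solving the Maxwell system in $\Omega\setminus D$ with homogeneous tangential data on $\partial D$ and with a nontrivial source or boundary condition chosen so that $\nu\times E|_{\partial\Omega\setminus\Gamma}=0$ and $\nu\times E|_\Gamma$ equals the trace of a compactly supported field---extended by zero across $D$ this gives a global solution vanishing on $D$; by Theorem~\ref{Theorem UCP}(b) applied with $\mathcal F = D$ (using that $\Omega\setminus D$ is connected to $\Gamma$) a solution vanishing on $D$ and having zero tangential traces on $\Gamma$ would have to be trivial, so one can certainly arrange a nontrivial one with support meeting $M$. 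This is the contrapositive formulation: the estimate would let me conclude a global solution vanishing on $D$ vanishes on all of $M\supseteq M_0$, hence (again by UCP, part (a)) vanishes on all of $\Omega$, contradicting its nontriviality. Assembling this carefully gives the result.

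\textbf{Main obstacle.} The delicate point, and the one I expect to require the most care, is Step~2---specifically, the precise mechanism by which the a-priori estimate $\norm{L_M f}\le c\norm{L_D f}$ is converted into the rigidity statement "vanishing on $D$ implies vanishing on $M$." In the conductivity and Helmholtz cases this is handled by a clever choice of test sequences and a Hahn--Banach/weak-$*$ compactness argument in \cite{gebauer2008localized,harrach2017monotonicity}; here one must check that the relevant solution operators have the right functional-analytic properties (bounded, with the solution spaces being Hilbert spaces---note $L^2(\Omega)^6$ is fine), that the adjoint operators are correctly identified (this uses the variational formulation and integration by parts \eqref{eq:int_by_parts_tangential}, and one should be careful with the partial-data trace space $H(\Gamma)$ and its dual), and that the unique continuation input is exactly of the form supplied by Theorem~\ref{Theorem UCP}. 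A secondary technical nuisance is that we need $M$ replaced by a nice open subset $M_0$ with $\overline{M_0}\Subset\Omega$, $M_0\cap(\Omega\setminus D)\ne\emptyset$, $M_0\not\subseteq D$, so that UCP part (a) can be invoked on the open set where the limiting solution vanishes; since $M\not\subseteq D$ and $D$ is closed, such an $M_0$ exists, but this reduction should be stated explicitly. Modulo these points, the argument is a faithful transcription of the Gebauer scheme with the Laplacian/Helmholtz operator replaced by the anisotropic Maxwell operator and with Theorem~\ref{Theorem UCP} supplying unique continuation.
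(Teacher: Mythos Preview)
Your Step~1 is essentially the paper's framework: define the solution--restriction operators $\mathcal L_M,\mathcal L_D:H(\Gamma)\to L^2(\cdot)^6$ and invoke the abstract equivalence (the paper's Lemma~\ref{Lemma funal anal 2}) between the non-existence of a bound $\norm{\mathcal L_M f}\le C\norm{\mathcal L_D f}$ and the range non-inclusion $\mathscr R(\mathcal L_M^*)\not\subseteq\mathscr R(\mathcal L_D^*)$. The reduction to a small $M_0$ and the density of $C_c^\infty(\Gamma)$ are handled just as you indicate.

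The gap is in Step~2. You propose to contradict the norm estimate by exhibiting a global homogeneous Maxwell solution on $\Omega$, with boundary data in $H(\Gamma)$, that vanishes on $D$ but not on $M$. No such solution exists when $D$ has nonempty interior: vanishing on $\mathrm{int}\,D$ forces, by Theorem~\ref{Theorem UCP}(a), vanishing on all of $\Omega$. Your construction---solve on $\Omega\setminus D$ with $\nu\times E=0$ on $\partial D$ and extend by zero---does not produce a solution on $\Omega$, because the zero extension solves the homogeneous system across $\partial D$ only if \emph{both} tangential traces $\nu\times E$ and $\nu\times H$ vanish there; imposing both makes the problem on $\Omega\setminus D$ overdetermined, and Theorem~\ref{Theorem UCP}(b) then forces the trivial solution. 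So the contradiction never gets off the ground.

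The paper avoids this by working on the adjoint side. It first computes $\mathcal L_O^*$ explicitly (Lemma~\ref{lemma:adjoint_L}): $\mathcal L_O^*(J,K)=-\nu\times(\widetilde H\times\nu)|_\Gamma$, where $(\widetilde E,\widetilde H)$ solve the adjoint Maxwell system with \emph{interior sources} $J\chi_O,K\chi_O$ and zero tangential boundary data. It then shows $\mathscr R(\mathcal L_M^*)\cap\mathscr R(\mathcal L_D^*)=\{0\}$ (Lemma~\ref{Lemma funal anal 1}): if $g$ lies in both ranges, the two source solutions share Cauchy data on $\Gamma$, hence by Theorem~\ref{Theorem UCP}(b) agree on $\Omega\setminus(\overline M\cup D)$, and can be glued to a homogeneous solution on $\Omega$ with zero boundary data, which is zero since $k$ is non-resonant. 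Combined with density of $\mathscr R(\mathcal L_M^*)$ (injectivity of $\mathcal L_M$ via Theorem~\ref{Theorem UCP}(a)), this gives the required non-inclusion. The gluing succeeds precisely because the adjoint problems carry compactly supported sources in \emph{disjoint} regions---this is the structural ingredient your direct attempt lacks. If you want to salvage your viewpoint, the correct version is the Runge route in Section~\ref{Section 4}: one does not find an exact global solution vanishing on $D$, but an \emph{approximating sequence} of global solutions converging to a local solution that is zero on $\mathrm{int}\,D$ and nonzero on $M$; the approximation step is again proved by the adjoint computation and UCP.
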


\textbf{Proof of Theorem \ref{Main Theorem Localization}.}
We first note that it suffices to prove the theorem for an open subset of $M$. Hence, without loss of generality, we 
can assume that $M\Subset \Omega$ is open, $\overline M\cap D=\emptyset$ and that $\Omega\setminus (\overline M\cup D)$ is connected.
We follow the localized potentials strategy in \cite{gebauer2008localized,harrach2010exact,harrach2017monotonicity} and 
first describe the energy terms in Theorem \ref{Main Theorem Localization} as operator evaluations. Then we will show that the ranges of the adjoints of these operators 
have trivial intersection. A functional analytic relation between the norm of an operator evaluation and the range of its adjoint will then yield that
the operator evaluations cannot be bounded by each other, which then shows that we can drive one
energy term in theorem \ref{Main Theorem Localization} to infinity and the other one to zero.

For a measurable subset $O\subseteq \Omega$, we define
\[
\mathcal{L}_{O}:\ H(\Gamma) \to L^{2}(O)^3\times L^2(O)^3 \mbox{ by }f\mapsto (E|_O,H_O),
\]
where $H(\Gamma)$ was defined in \eqref{abstract trace space}, and $(E,H)\in L^2(\Omega)^3\times L^2(\Omega)^3$
solve \eqref{eq:Maxwell_locpot_1}--\eqref{eq:Maxwell_locpot_2}
with boundary data $\nu\times E|_{\partial \Omega}=f$. Now we characterize the adjoint of this operator.

\begin{lem}\label{lemma:adjoint_L}
The adjoint of $\mathcal{L}_O$ is given by
\[
\mathcal{L}_{O}^*:\ L^{2}(O)^3\times L^{2}(O)^3\to H(\Gamma)^*\mbox{ by }(J,K)\to -\nu\times(\widetilde{H}\times\nu) |_{\Gamma},
\]
where $(\widetilde{E},\widetilde{F})\in L^2(\Omega)^3\times L^2(\Omega)^3$ solve the (adjoint) Maxwell system (cf.\ Theorem \ref{thm:Maxwell_well_posedness})
\begin{equation*}
\begin{cases}
\nabla\times \widetilde{E}+ik\mu \widetilde{H}=K\chi_O & \mbox{ in }\Omega,\\
\nabla\times \widetilde{H}-ik\epsilon \widetilde{E}=J\chi_O  &  \mbox{ in }\Omega,\\
\nu\times \widetilde{E}|_{\partial \Omega}=0,
\end{cases}
\end{equation*}
and $K\chi_O$ and $J\chi_O$ denote the zero extensions of $K$ and $J$ to $\Omega$.
\end{lem}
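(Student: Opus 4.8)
The plan is to compute $\langle \mathcal{L}_O f, (J,K)\rangle_{L^2(O)^3\times L^2(O)^3}$ directly and move all operations onto the adjoint Maxwell pair $(\widetilde E,\widetilde H)$ via integration by parts, so as to identify the result as a dual pairing on $H(\Gamma)\times H(\Gamma)^*$ with the claimed tangential trace of $\widetilde H$. First I would fix $f\in H(\Gamma)$, let $(E,H)$ solve \eqref{eq:Maxwell_locpot_1}--\eqref{eq:Maxwell_locpot_2} with $\nu\times E|_{\partial\Omega}=f$, and let $(\widetilde E,\widetilde H)$ solve the stated adjoint system; both solutions exist, are unique, and depend continuously on the data by Theorem~\ref{thm:Maxwell_well_posedness} applied with the coefficient pair $(\epsilon,\mu)$ unchanged but $k$ replaced by $-k$ (which is non-resonant exactly when $k$ is, since the resonance set is symmetric). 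Then
\[
\langle \mathcal{L}_O f,(J,K)\rangle = \int_O \left( E\cdot \overline J + H\cdot \overline K\right)\dx = \int_\Omega \left( E\cdot \overline{J\chi_O} + H\cdot \overline{K\chi_O}\right)\dx,
\]
and I would substitute $\overline{J\chi_O} = \nabla\times\overline{\widetilde H} - ik\epsilon\overline{\widetilde E}$ and $\overline{K\chi_O} = \nabla\times\overline{\widetilde E} + ik\mu\overline{\widetilde H}$ (here using that $\epsilon,\mu$ are real and symmetric, so they pass through the conjugation and are self-adjoint as multiplication operators).

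The core computation is then to expand
\[
\int_\Omega \left( E\cdot(\nabla\times\overline{\widetilde H}) - ik\, E\cdot(\epsilon\overline{\widetilde E}) + H\cdot(\nabla\times\overline{\widetilde E}) + ik\, H\cdot(\mu\overline{\widetilde H})\right)\dx
\]
and apply the integration-by-parts formula \eqref{eq:int_by_parts_tangential} to the two curl terms. In $\int_\Omega E\cdot(\nabla\times\overline{\widetilde H})\dx$ I would move the curl onto $E$, producing $\int_\Omega (\nabla\times E)\cdot\overline{\widetilde H}\dx$ plus a boundary term; likewise for $\int_\Omega H\cdot(\nabla\times\overline{\widetilde E})\dx$ move the curl onto $H$, producing $\int_\Omega(\nabla\times H)\cdot\overline{\widetilde E}\dx$ plus a boundary term. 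Now substitute $\nabla\times E = ik\mu H$ and $\nabla\times H = -ik\epsilon E$ from \eqref{eq:Maxwell_locpot_1}--\eqref{eq:Maxwell_locpot_2}: the resulting bulk terms are $\int_\Omega ik\,(\mu H)\cdot\overline{\widetilde H}\dx - \int_\Omega ik\,(\epsilon E)\cdot\overline{\widetilde E}\dx$, which exactly cancel against $-ik\int_\Omega E\cdot(\epsilon\overline{\widetilde E})\dx + ik\int_\Omega H\cdot(\mu\overline{\widetilde H})\dx$ by symmetry of $\epsilon$ and $\mu$. Hence only the boundary terms survive.

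It remains to collect the two boundary contributions. The term from $\int_\Omega E\cdot(\nabla\times\overline{\widetilde H})$ carries a factor involving $\nu\times\widetilde H|_{\partial\Omega}$ paired against $\nu\times(E\times\nu)|_{\partial\Omega}$, while the term from $\int_\Omega H\cdot(\nabla\times\overline{\widetilde E})$ carries $\nu\times\widetilde E|_{\partial\Omega}=0$ paired against $\nu\times(H\times\nu)$ and therefore vanishes. Writing the surviving pairing carefully with \eqref{eq:int_by_parts_tangential} (and keeping track of the sign produced by swapping the roles of the two factors, i.e.\ $(\nu\times A)\cdot(\nu\times(B\times\nu)) = -(\nu\times B)\cdot(\nu\times(A\times\nu))$) yields
\[
\langle \mathcal{L}_O f,(J,K)\rangle = -\int_{\partial\Omega}\left(\nu\times\overline{\widetilde H}|_{\partial\Omega}\right)\cdot\left(\nu\times(E|_{\partial\Omega}\times\nu)\right)\dx[S],
\]
which, since $\nu\times E|_{\partial\Omega}=f$ is supported in $\overline\Gamma$ (as $f\in H(\Gamma)$), equals $\langle -\nu\times(\widetilde H\times\nu)|_\Gamma, f\rangle_{H(\Gamma)^*\times H(\Gamma)}$. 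This identifies $\mathcal{L}_O^*(J,K) = -\nu\times(\widetilde H\times\nu)|_\Gamma$ as claimed. The main obstacle is purely bookkeeping: getting the conjugations, the sign in the tangential duality pairing, and the restriction-to-$\Gamma$ identification (using the quotient-space description of $\gamma_T^{(\Gamma)}$ from Subsection~\ref{subsect:spaces_traces}) all consistent; the analytic content — existence of the adjoint solution and validity of \eqref{eq:int_by_parts_tangential} — is already in place.
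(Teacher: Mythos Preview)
Your approach is correct and essentially the same as the paper's: substitute the adjoint Maxwell equations, integrate by parts, and observe that the bulk terms cancel so that only the boundary pairing with $\nu\times(\widetilde H\times\nu)|_\Gamma$ survives. The paper organizes the cancellation slightly differently---it eliminates $\overline H$ via $\overline H=-\tfrac{1}{ik}\mu^{-1}\nabla\times\overline E$ and then invokes the variational identity $\mathcal B(\overline E,\widetilde E)=0$ from Theorem~\ref{thm:Maxwell_well_posedness}(a)---and note that your conjugated substitutions carry a sign slip (since $\overline{-ik}=+ik$, one has $\overline{J\chi_O}=\nabla\times\overline{\widetilde H}+ik\epsilon\overline{\widetilde E}$ and $\overline{K\chi_O}=\nabla\times\overline{\widetilde E}-ik\mu\overline{\widetilde H}$), which is precisely the bookkeeping you already flag.
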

\begin{proof}
As in subsection \ref{subsect:spaces_traces}, we write the dual pairing on $H(\Gamma)^*\times H(\Gamma)$ as an 
integral for the sake of readability. With this notation we have that
\begin{align*}
\lefteqn{\int_\Gamma \overline{f} \cdot \mathcal{L}_{O}^* (J, K)\dx[s]}\\
&= \int_O (J,K)\cdot \overline{(E|_O,H_O)} \dx = \int_\Omega J\chi_O \cdot \overline{E} \dx + \int_\Omega K\chi_O\cdot \overline{H}\dx\\
&= \int_\Omega J \chi_O \cdot \overline{E} \dx - \frac{1}{ik}  \int_\Omega K\chi_O\cdot \left( \mu^{-1} \nabla\times \overline{E}\right)\dx\\
&= \int_\Omega \left(\nabla\times \widetilde{H}-ik\epsilon \widetilde{E}\right)\cdot \overline{E}\dx
 - \frac{1}{ik}  \int_\Omega \left( \nabla\times \widetilde{E}+ik\mu \widetilde{H}\right)\cdot \left( \mu^{-1} \nabla\times \overline{E}\right)\dx\\
&= \int_\Omega \left(\nabla\times \widetilde{H}\right)\cdot \overline{E}\dx
-  \int_\Omega  \widetilde{H}\cdot \left(  \nabla\times \overline{E}\right)\dx\\
&\quad - \frac{1}{ik}\left( 
 \int_\Omega  \left( \mu^{-1} \nabla\times \overline{E}\right) \cdot \left( \nabla\times \widetilde{E}\right)  \dx
- \int_\Omega k^2 \epsilon \overline{E} \cdot \widetilde{E} \dx
\right)\\
&=  - \int_{\partial \Omega} \left(\nu \times \overline{E}|_{\partial \Omega}\right) \cdot  \left(\nu \times \widetilde{H}|_{\partial \Omega}\times \nu\right) \dx[S]= - \int_{\Gamma} \overline{f}\cdot  \left(\nu \times \widetilde{H}|_{\Gamma}\times \nu\right) \dx[S],
\end{align*}
where we used that $\epsilon$ and $\mu$ are real-valued and symmetric. We also utilized the integration by parts formula
\eqref{eq:int_by_parts_tangential} and that $\widetilde{E}\in H_0(\cu,\Omega)$ implies that
\[
\int_\Omega  \left( \mu^{-1} \nabla\times \overline{E}\right) \cdot \left( \nabla\times \widetilde{E}\right)  \dx
- \int_\Omega k^2 \epsilon \overline{E} \cdot \widetilde{E} \dx=0
\]
by Theorem~\ref{thm:Maxwell_well_posedness} (a).
\end{proof}

Then we have the following property for the ranges of the adjoint operators $\mathcal{L}_{M}^*$ and $\mathcal{L}_{D}^*$.
\begin{lem}
\label{Lemma funal anal 1} 
$\mathcal{L}_{M}$ and $\mathcal{L}_{D}^{*}$ are injective, 
the ranges $\mathscr{R}(\mathcal{L}_{M}^{*})$ and $\mathscr{R}(\mathcal{L}_{D}^{*})$ are both dense in $H(\Gamma)^*$, 
and 
\begin{equation}
\mathscr{R}(\mathcal{L}_{M}^{*})\cap\mathscr{R}(\mathcal{L}_{D}^{*})=\{0\}.\label{Zero intersection}
\end{equation}
\end{lem}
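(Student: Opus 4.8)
The plan is to prove the three assertions in Lemma \ref{Lemma funal anal 1} in the natural order: injectivity of $\mathcal{L}_M$, injectivity of $\mathcal{L}_D^*$, density of the two ranges, and finally the trivial-intersection property \eqref{Zero intersection}, which is the crux.

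\textbf{Injectivity and density.} First I would show $\mathcal{L}_M$ is injective: if $\mathcal{L}_M f = 0$ then the solution $(E,H)$ of \eqref{eq:Maxwell_locpot_1}--\eqref{eq:Maxwell_locpot_2} vanishes on the open set $M$, so by the unique continuation property (Theorem \ref{Theorem UCP}(a)) we get $(E,H)\equiv (0,0)$ in $\Omega$; hence $f=\nu\times E|_\Gamma=0$. For $\mathcal{L}_D^*$: by Lemma \ref{lemma:adjoint_L}, $\mathcal{L}_D^*(J,K)=0$ means the adjoint solution $(\widetilde E,\widetilde H)$ has $\nu\times(\widetilde H\times\nu)|_\Gamma=0$; combined with $\nu\times\widetilde E|_\Gamma=0$ (the imposed boundary condition, which also gives $\nu\times\widetilde H|_\Gamma=0$ via the first adjoint Maxwell equation near $\Gamma$, since $K\chi_O$ vanishes near $\partial\Omega$ because $D\Subset\Omega$), Theorem \ref{Theorem UCP}(b) applied with $\mathcal F = D$ forces $(\widetilde E,\widetilde H)\equiv(0,0)$ on $\Omega\setminus D$; then the adjoint equations on $\Omega\setminus D$ read $\nabla\times\widetilde E = -ik\mu\widetilde H = 0$ etc.\ but more to the point the source terms $J\chi_O,K\chi_O$ are supported in $D$ and $\widetilde E=\widetilde H=0$ just outside $D$, so by the standard argument (extend by zero, use UCP / the equation inside $D$) one concludes $J=K=0$ on $D$. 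For density: the range of $\mathcal{L}_M^*$ is dense in $H(\Gamma)^*$ iff $\mathcal{L}_M$ is injective, and likewise $\mathscr{R}(\mathcal{L}_D^*)$ dense iff $\mathcal{L}_D$ injective; the latter follows again from Theorem \ref{Theorem UCP}(a) exactly as for $\mathcal{L}_M$ (using $D$ has nonempty interior). This is all routine.

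\textbf{The trivial intersection.} The heart of the matter is \eqref{Zero intersection}. Suppose $g\in\mathscr{R}(\mathcal{L}_M^*)\cap\mathscr{R}(\mathcal{L}_D^*)$, say $g=\mathcal{L}_M^*(J_M,K_M) = \mathcal{L}_D^*(J_D,K_D)$ with $(J_M,K_M)\in L^2(M)^3\times L^2(M)^3$ and $(J_D,K_D)\in L^2(D)^3\times L^2(D)^3$. Let $(\widetilde E_M,\widetilde H_M)$ and $(\widetilde E_D,\widetilde H_D)$ be the corresponding adjoint solutions from Lemma \ref{lemma:adjoint_L}, with sources supported in $M$ and $D$ respectively. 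The difference $(w,v):=(\widetilde E_M - \widetilde E_D,\ \widetilde H_M - \widetilde H_D)$ solves the homogeneous adjoint Maxwell system in $\Omega\setminus(\overline M\cup D)$, and on $\Gamma$ it satisfies $\nu\times w|_\Gamma = 0$ (both pieces have vanishing tangential electric trace) and $\nu\times(v\times\nu)|_\Gamma = -g + g = 0$, whence also $\nu\times v|_\Gamma=0$ there. Since $\Omega\setminus(\overline M\cup D)$ is connected to $\Gamma$ (arranged at the start of the proof of Theorem \ref{Main Theorem Localization}), Theorem \ref{Theorem UCP}(b) with $\mathcal F=\overline M\cup D$ gives $(w,v)\equiv(0,0)$ on $\Omega\setminus(\overline M\cup D)$, i.e.\ $\widetilde E_M=\widetilde E_D$ and $\widetilde H_M=\widetilde H_D$ there. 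But $\widetilde E_D,\widetilde H_D$ also satisfy the homogeneous adjoint system across $\overline M$ (their sources live in $D$, disjoint from $\overline M$), so one can use this to conclude $\widetilde E_M=\widetilde H_M=0$ on a neighborhood of $\overline M\setminus$(stuff), hence by UCP (Theorem \ref{Theorem UCP}(a)) that $\widetilde E_M\equiv\widetilde H_M\equiv 0$ on all of $\Omega$; feeding this back into the adjoint equations with source $(J_M,K_M)\chi_M$ yields $J_M=K_M=0$, so $g=\mathcal{L}_M^*(0,0)=0$. Symmetrically $J_D=K_D=0$, confirming \eqref{Zero intersection}.

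\textbf{Expected obstacle.} The delicate point — and the step I would write most carefully — is the passage from ``$\widetilde E_M=\widetilde E_D$, $\widetilde H_M=\widetilde H_D$ on $\Omega\setminus(\overline M\cup D)$'' to ``$\widetilde E_M\equiv 0$ in $\Omega$.'' One must glue together the two facts that $(\widetilde E_M,\widetilde H_M)$ solves the homogeneous adjoint system \emph{outside $M$} while $(\widetilde E_D,\widetilde H_D)$ solves it \emph{outside $D$}, using the geometric hypotheses $\overline M\cap D=\emptyset$ and connectedness of $\Omega\setminus(\overline M\cup D)$, to produce a single solution that is homogeneous on a neighborhood of some nonempty open set where it vanishes, and then invoke Theorem \ref{Theorem UCP}(a). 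Care is needed to ensure the relevant sets are nonempty and open (this is exactly why one reduces to $M\Subset\Omega$ open with $\Omega\setminus(\overline M\cup D)$ connected at the start of the proof) and to check the regularity $(\widetilde E_M,\widetilde H_M)\in H(\cu,\Omega)\times H(\cu,\Omega)$ needed to apply the UCP in its $H(\cu)$ form on the relevant subdomain. Everything else is bookkeeping with the integration-by-parts identity \eqref{eq:int_by_parts_tangential} and Theorem \ref{thm:Maxwell_well_posedness}.
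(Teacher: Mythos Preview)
Your argument for the trivial intersection \eqref{Zero intersection} has a genuine gap. Up to the point where you obtain $(\widetilde E_M,\widetilde H_M)=(\widetilde E_D,\widetilde H_D)$ on $\Omega\setminus(\overline M\cup D)$ via Theorem~\ref{Theorem UCP}(b), everything is fine and matches the paper. But then you assert ``$\widetilde E_M=\widetilde H_M=0$ on a neighborhood of $\overline M\setminus$(stuff)'' and invoke Theorem~\ref{Theorem UCP}(a) to get $\widetilde E_M\equiv\widetilde H_M\equiv 0$ on all of $\Omega$. Neither step is justified: nothing established so far forces either adjoint pair to vanish on any open set (UCP only \emph{propagates} vanishing, it cannot create it), and Theorem~\ref{Theorem UCP}(a) is inapplicable to $(\widetilde E_M,\widetilde H_M)$ on $\Omega$ anyway, since that pair carries nonzero sources in $M$ and so does not satisfy the homogeneous system there. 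Your ``expected obstacle'' paragraph repeats the error: you plan to glue into a single homogeneous solution ``on a neighborhood of some nonempty open set where it vanishes,'' but no such vanishing has been shown.

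The paper's proof supplies the missing idea. One glues the two adjoint solutions into a single pair $(\mathbb E,\mathbb H)$ by taking $(\widetilde E_D,\widetilde H_D)$ in $M$, $(\widetilde E_M,\widetilde H_M)$ in $D$, and either one on $\Omega\setminus(\overline M\cup D)$. This $(\mathbb E,\mathbb H)$ solves the \emph{homogeneous} adjoint Maxwell system on all of $\Omega$ with $\nu\times\mathbb E|_{\partial\Omega}=0$, and now the \emph{non-resonance hypothesis} --- not UCP --- forces $(\mathbb E,\mathbb H)\equiv(0,0)$. In particular $\widetilde H_M=\mathbb H=0$ near $\partial\Omega$, whence $g=0$. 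Non-resonance is the indispensable ingredient here, and your proposal never invokes it for this step. (A secondary remark: your argument that $\mathcal L_D^*$ is injective is also flawed --- from $(\widetilde E,\widetilde H)=0$ on $\Omega\setminus D$ one cannot conclude $J=K=0$ inside $D$, and indeed $\mathcal L_D^*$ is generally \emph{not} injective. The lemma's statement almost certainly contains a typo for $\mathcal L_D$; the paper's own proof establishes injectivity of $\mathcal L_M$ and $\mathcal L_D$, which is what yields density of the adjoint ranges and is all that is used downstream.)
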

\begin{proof}
The proof follows from the UCP for the Maxwell system. By Theorem
\ref{Theorem UCP} (a), one can see that $\mathcal{L}_{M}$
and $\mathcal{L}_{D}$ are injective, and therefore $\mathscr{R}(\mathcal{L}_{M\cap B}^{*})$
and $\mathscr{R}(\mathcal{L}_{D}^{*})$ both are dense in $H(\Gamma)^*$.

To prove \eqref{Zero intersection}, let $g\in\mathscr{R}(\mathcal{L}_{M}^{*})\cap\mathscr{R}(\mathcal{L}_{D}^{*})$,
then there exist $J_{M},K_M\in L^{2}(M)$ and $J_{D},K_D\in L^{2}(D)$ such
that the solutions $(E_M,H_M), (E_D,H_D)\in L^2(\Omega)^3\times L^2(\Omega)^3$ of
\[
\begin{cases}
\nabla\times E_{M}+ik\mu H_{M}=K_M\chi_M & \mbox{ in }\Omega\\
\nabla\times H_{M}-ik\epsilon E_{M}=J_{M}\chi_{M} & \mbox{ in }\Omega\\
\nu\times E_{M}|_{\partial \Omega}=0 & \mbox{ on }\partial\Omega
\end{cases}\mbox{ and }\begin{cases}
\nabla\times E_{D}+ik\mu H_{D}=K_D\chi _D & \mbox{ in }\Omega\\
\nabla\times H_{D}-ik\epsilon E_{D}=J_{D}\chi_{D} & \mbox{ in }\Omega\\
\nu\times E_{D}|_{\partial \Omega}=0 & \mbox{ on }\partial\Omega
\end{cases}
\]
fulfill 
\[
\nu\times (H_{M}\times \nu)|_\Gamma=g=\nu\times (H_{D}\times \nu)|_\Gamma.
\]

Since $\Omega\setminus (\overline M \cup D)$ is connected, we obtain by using Theorem \ref{Theorem UCP} (b) that
\begin{equation*}
E_{M}=E_{D}\mbox{ in }\Omega\setminus (\overline{M}\cup D).\label{some unique solution}
\end{equation*}
Hence, we can define
\[
\mathbb{E}:=\begin{cases}
E_{D} & \mbox{ in }M\\
E_{M} & \mbox{ in }D\\
E_{D}=E_{M} & \mbox{ in } \Omega\setminus (\overline{M}\cup D)
\end{cases},\mbox{ and }\mathbb{H}:=\begin{cases}
H_{D} & \mbox{ in }M\\
H_{M} & \mbox{ in }D\\
H_{D}=H_{M} & \mbox{ in } \Omega\setminus (\overline{M}\cup D)
\end{cases}.
\]
As in the proof of UCP,  Theorem \ref{Theorem UCP} (b), it is easy
to see that $(\mathcal{E},\mathcal{H})\in L^2(\Omega)^3\times L^2(\Omega)^3$
is a solution of 
\[
\begin{cases}
\nabla\times\mathbb{E}+ik\mu\mathbb{H}=0 & \mbox{ in }\Omega,\\
\nabla\times\mathbb{H}-ik\epsilon\mathbb{E}=0 & \mbox{ in }\Omega,\\
\nu\times\mathbb{E}|_{\partial \Omega}=0 & \mbox{ on }\partial\Omega.
\end{cases}
\]
Since $k$ is non-resonant, it follows that $(\mathbb{E},\mathbb{H})=(0,0)$ so that $g=0$. This completes the proof.\end{proof}

Now we can use the following tool from functional analysis.

\begin{lem}\label{Lemma funal anal 2}
Let $X$, $Y_1$ and $Y_2$ be Hilbert spaces, and $\mathcal{A}_1:\ X\to Y_1$ and $\mathcal{A}_2:\ X\to Y_2$ be linear bounded operators.
Then
\[
\exists C>0:\ \norm{\mathcal{A}_1x}\leq C \norm{\mathcal{A}_2 x} \quad \forall x\in X
\quad \text{ if and only if } \quad \mathscr{R}(\mathcal{A}_1^*)\subseteq \mathscr{R}(\mathcal{A}_2^*). 
\] 
\end{lem}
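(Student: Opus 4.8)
The plan is to prove the abstract functional-analytic duality Lemma~\ref{Lemma funal anal 2}, which is the classical range inclusion vs.\ norm estimate characterization (sometimes attributed to Douglas, or Embry). I would prove both implications directly, using only the closed graph theorem, the identity $\overline{\mathscr R(\mathcal A^*)}=\mathscr N(\mathcal A)^\perp$, and the fact that $\mathscr R(\mathcal A_2^*)\subseteq \mathscr R(\mathcal A_1^*)$ is the relevant direction after matching up notation — note that in the statement as written the estimate $\norm{\mathcal A_1 x}\le C\norm{\mathcal A_2 x}$ should correspond to $\mathscr R(\mathcal A_1^*)\subseteq \mathscr R(\mathcal A_2^*)$, so I will keep that orientation throughout.

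First I would prove the ``only if'' direction, which is the easier one. Assume $\norm{\mathcal A_1 x}\le C\norm{\mathcal A_2 x}$ for all $x\in X$. Given $y_1\in Y_1$, I want to find $y_2\in Y_2$ with $\mathcal A_1^* y_1=\mathcal A_2^* y_2$. The idea is to define a linear functional on the subspace $\mathscr R(\mathcal A_2)\subseteq Y_2$ by $\mathcal A_2 x\mapsto \langle \mathcal A_1 x, y_1\rangle$; this is well-defined and bounded (by $C\norm{y_1}$) precisely because of the norm estimate, which forces $\mathcal A_2 x=0 \Rightarrow \mathcal A_1 x=0$ and gives the bound. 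Extending by Hahn--Banach (or by continuity to $\overline{\mathscr R(\mathcal A_2)}$ and then zero on the orthogonal complement) and invoking Riesz representation yields $y_2\in Y_2$ with $\langle \mathcal A_1 x,y_1\rangle=\langle \mathcal A_2 x,y_2\rangle$ for all $x$, i.e.\ $\mathcal A_1^* y_1=\mathcal A_2^* y_2$, so $\mathscr R(\mathcal A_1^*)\subseteq \mathscr R(\mathcal A_2^*)$.

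For the ``if'' direction, assume $\mathscr R(\mathcal A_1^*)\subseteq \mathscr R(\mathcal A_2^*)$. I would consider the map $T:\mathscr R(\mathcal A_2^*)\to \mathscr R(\mathcal A_1^*)$ hidden in the construction: actually the cleaner route is to apply the closed graph theorem to a suitable operator. Define on $\mathscr R(\mathcal A_2)\subseteq Y_2$ (or rather on $\overline{\mathscr R(\mathcal A_2)}$) the operator that sends $\mathcal A_2 x$ to $\mathcal A_1 x$; to see this is well-defined one checks $\mathscr N(\mathcal A_2)\subseteq \mathscr N(\mathcal A_1)$, which follows from the range inclusion by taking orthogonal complements: $\mathscr N(\mathcal A_2)=\mathscr R(\mathcal A_2^*)^\perp\subseteq \mathscr R(\mathcal A_1^*)^\perp=\mathscr N(\mathcal A_1)$ (using $\overline{\mathscr R(\mathcal A_i^*)}=\mathscr N(\mathcal A_i)^\perp$ and that the inclusion of ranges passes to closures). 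Then I would show this operator has closed graph — if $\mathcal A_2 x_n\to z$ and $\mathcal A_1 x_n\to w$, one shows $w$ is determined by $z$, using a decomposition of $x_n$ along $\mathscr N(\mathcal A_2)$ and its complement together with the range inclusion to control the relevant limits — so it is bounded by the closed graph theorem, giving exactly $\norm{\mathcal A_1 x}\le C\norm{\mathcal A_2 x}$.

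The main obstacle is the ``if'' direction: the naive operator $\mathcal A_2 x\mapsto \mathcal A_1 x$ need not be densely defined or its graph need not obviously be closed without an argument that leverages the range inclusion on the adjoint side rather than just the kernel inclusion. The cleanest fix, which I would adopt, is to avoid the closed graph theorem on $Y_2$ and instead work with the adjoint map: the inclusion $\mathscr R(\mathcal A_1^*)\subseteq \mathscr R(\mathcal A_2^*)$ lets one define $S:\mathscr R(\mathcal A_2^*)\supseteq \mathscr R(\mathcal A_1^*)$, factoring $\mathcal A_1^*$ through $\mathcal A_2^*$ on the quotient $Y_2/\mathscr N(\mathcal A_2^*)$ (which is a Hilbert space), apply closed graph there to get boundedness of the factor map, and then dualize: the adjoint of a bounded operator $B$ with $\mathcal A_1^*=\mathcal A_2^* B$ gives $\mathcal A_1=B^*\mathcal A_2$, whence $\norm{\mathcal A_1 x}=\norm{B^*\mathcal A_2 x}\le \norm{B^*}\,\norm{\mathcal A_2 x}$. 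I would present this factorization argument as the core of the proof, since it handles both the well-definedness and the boundedness in one clean step and is robust to the non-closedness of the ranges.
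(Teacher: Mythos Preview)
Your argument is correct; this is the standard Douglas factorization lemma, and your final route via factoring $\mathcal A_1^*=\mathcal A_2^* B$ through $Y_2/\mathscr N(\mathcal A_2^*)\cong \mathscr N(\mathcal A_2^*)^\perp$, applying the closed graph theorem to obtain boundedness of $B$, and then dualizing to get $\mathcal A_1=B^*\mathcal A_2$ is clean and complete. The ``only if'' direction is also fine as written.

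The paper does not actually prove this lemma: it simply cites \cite[Lemma~2.5]{gebauer2008localized}, where the result is established in the more general setting of reflexive Banach spaces. So you are not taking a different route from the paper so much as supplying the argument the paper outsources. Your Hilbert-space proof is the natural specialization; the only thing the reference adds is the reflexive Banach space generality (where one uses quotient maps and annihilators in place of orthogonal complements, and $\mathcal A^{**}=\mathcal A$ under the canonical identification). One minor presentational point: your write-up would be tighter if you dropped the first attempt at the ``if'' direction (the operator $\mathcal A_2 x\mapsto \mathcal A_1 x$ on $\mathscr R(\mathcal A_2)$) and went straight to the adjoint factorization, since as you yourself note the kernel inclusion $\mathscr N(\mathcal A_2)\subseteq \mathscr N(\mathcal A_1)$ alone does not give the closed-graph step without essentially reproving the factorization anyway.
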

\begin{proof}
This is proven for reflexive Banach spaces in \cite[Lemma~2.5]{gebauer2008localized}. 
\end{proof}

\begin{proof}[Proof of Theorem \ref{Main Theorem}]
From Lemma \ref{Lemma funal anal 1} it follows that $\mathscr{R}(\mathcal{L}_{M}^{*})\not\subseteq \mathscr{R}(\mathcal{L}_{D}^{*})$. Using 
Lemma \ref{Lemma funal anal 2} this shows that
\begin{equation*}
\not\exists C>0:\ \norm{\mathcal{L}_{M}f}\leq C \norm{\mathcal{L}_{D}f}
\quad \text{ for all } f\in H(\Gamma),
\end{equation*}
and by continuity of $\mathcal{L}_{M}$ and $\mathcal{L}_{D}$ and density of $C^\infty_c(\Gamma)\subset H(\Gamma)$ this is equivalent to
\begin{equation}\label{eq:norm_bound_Cinfty}
\not\exists C>0:\ \norm{\mathcal{L}_{M}f}\leq C \norm{\mathcal{L}_{D}f}
\quad \text{ for all } f\in C^\infty_c(\Gamma).
\end{equation}
Using \eqref{eq:norm_bound_Cinfty} with $C:=\ell^2$ for all $\ell\in \mathbb{N}$ we thus obtain a sequence $\left\{\widetilde f^{(\ell)}\right\}_{\ell\in \mathbb{N}}\subset C^\infty_c(\Gamma)$ 
with 
\[
\norm{\mathcal{L}_{M}\widetilde f^{(\ell)}}> \ell^2 \norm{\mathcal{L}_{D}\widetilde f^{(\ell)}} \quad \text{ for all } \ell\in \mathbb{N}.
\] 
By injectivity $\mathcal{L}_{M}\widetilde f^{(\ell)}\neq 0$ implies $\widetilde f^{(\ell)}\neq 0$ and $\mathcal{L}_{D}\widetilde f^{(\ell)}\neq 0$, so that we can define
\[
f^{(\ell)}:=\dfrac{\widetilde f^{(\ell)}}{\ell \norm{\mathcal{L}_{D}\widetilde f^{(\ell)}}} \in C^\infty_c(\Gamma)
\]
and it follows that
\begin{align*}
\int_{M}\left(|E^{(\ell)}|^{2}+|H^{(\ell)}|^2\right)\dx &= \norm{\mathcal{L}_{M} f^{(\ell)}}^2>\ell^2\to \infty,\\
\int_{D}\left(|E^{(\ell)}|^{2}+|H^{(\ell)}|^2\right)\dx &= \norm{\mathcal{L}_{D} f^{(\ell)}}^2=\frac{1}{\ell^2}\to 0,
\end{align*}
so that Theorem \ref{Main Theorem} is proven.
\end{proof}


\begin{rem}
\begin{enumerate}[(a)]
\item A constructive version of the existence proof for the localized fields can be obtained as in \cite[Lemma 2.8]{gebauer2008localized}.
\item With the same arguments as in \cite[Section~4.1]{harrach2017monotonicity} one can also show that
for all spaces $W\subseteq H(\Gamma)$ with finite codimension, one can find a sequence $\left\{f^{(\ell)}\right\}_{\ell \in \mathbb N}\subset W$ 
so that the corresponding electromagnetic fields fulfill \eqref{eq:localized}. This might be useful for developing monotonicity-based
reconstruction methods as in \cite{harrach2017monotonicity}.
\end{enumerate}
\end{rem}

\section{Runge approximation property for the partial data Maxwell system \label{Section 4}}

In this section we derive an extension of the localization result in Theorem \ref{Main Theorem} which shows a \emph{Runge approximation property} for the partial data Maxwell system that we consider of independent interest.
We show that every solution of the Maxwell system on a subset of $\Omega$ with Lipschitz boundary and connected complement can be approximated arbitrarily well by a sequence of solutions on the whole domain $\Omega$ with partial boundary data. Since we can choose a solution that is zero on a part of $\Omega$ and non-zero on another part of $\Omega$, this also implies a-fortiori the localization result Theorem \ref{Main Theorem}, cf.\ also \cite{harrach2017monotonicity} for the connection between Runge approximation properties and localized solutions.

\begin{thm}\label{Main Theorem Runge}
Let $\Omega\subset\mathbb{R}^{3}$ be a bounded Lipschitz domain and $\Gamma\subseteq\partial \Omega$ be a relatively open piece of the boundary. Let $\epsilon,\mu\in L^\infty(\Omega,\R^{3\times 3})$ be real-valued, piecewise Lipschitz continuous functions satisfying \eqref{Positive lower bound} and $k\in\mathbb{R}_+$ be
a non-resonant wavenumber. 

Let $O\Subset\Omega$ be an open set with Lipschitz boundary and connected complement $\Omega\setminus \overline O$. For every solution $(e,h)\in L^2(O)^3\times L^2(O)^3$
of
\begin{alignat}{2}
\label{eq:Maxwell_Runge_subset_1} \nabla\times e-ik\mu h&=0  && \quad \mbox{ in } O,\\
\label{eq:Maxwell_Runge_subset_2} \nabla\times h+ik\epsilon e&=0  && \quad \mbox{ in } O,
\end{alignat}
there exists a sequence $\left\{ f^{(\ell)}\right\} _{\ell\in\mathbb{N}}\subset C_c^\infty(\Gamma)$
such that the electromagnetic fields fulfill
\begin{align*}
\norm{E^{(\ell)}-e}_{L^2(O)}\to 0 \quad \text{ and } \quad \norm{H^{(\ell)}-h}_{L^2(O)}\to 0 \quad \text{as}\quad\ell\to \infty,
\end{align*}
where $(E^{(\ell)},H^{(\ell)})\in L^2(\Omega)^3\times L^2(\Omega)^3$
solve \eqref{eq:Maxwell_locpot_1}--\eqref{eq:Maxwell_locpot_3}.
\end{thm}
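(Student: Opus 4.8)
The plan is to re-run the functional-analytic duality behind Theorem \ref{Main Theorem}, but now extracting density of the \emph{range} of the solution operator rather than a nontrivial intersection of two ranges. Let $\mathcal S(O)\subseteq L^2(O)^3\times L^2(O)^3$ be the set of all pairs $(e,h)$ solving \eqref{eq:Maxwell_Runge_subset_1}--\eqref{eq:Maxwell_Runge_subset_2} in $O$; since $\nabla\times$ is continuous from $L^2$ into the distributions and $\mu,\epsilon\in L^\infty$, this is a closed subspace. The operator $\mathcal L_O\colon H(\Gamma)\to L^2(O)^3\times L^2(O)^3$, $f\mapsto (E|_O,H|_O)$, from before Lemma \ref{lemma:adjoint_L} maps into $\mathcal S(O)$, hence $\overline{\mathscr R(\mathcal L_O)}\subseteq \mathcal S(O)$, and the whole theorem is equivalent to the reverse inclusion $\mathcal S(O)\subseteq\overline{\mathscr R(\mathcal L_O)}$. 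Indeed, granting this, any $(e,h)\in\mathcal S(O)$ is a limit $\mathcal L_O f^{(\ell)}\to(e,h)$ in $L^2(O)^3\times L^2(O)^3$ with $f^{(\ell)}\in H(\Gamma)$, and by density of $C_c^\infty(\Gamma)$ in $H(\Gamma)$ together with the continuity of $\mathcal L_O$ (Theorem \ref{thm:Maxwell_well_posedness}) one may replace the $f^{(\ell)}$ by elements of $C_c^\infty(\Gamma)$, which is exactly the asserted approximation with $(E^{(\ell)},H^{(\ell)})$ solving \eqref{eq:Maxwell_locpot_1}--\eqref{eq:Maxwell_locpot_3}.

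Since $\overline{\mathscr R(\mathcal L_O)}=(\ker\mathcal L_O^*)^\perp$ in the Hilbert space $L^2(O)^3\times L^2(O)^3$, the reverse inclusion amounts to $\ker\mathcal L_O^*\subseteq\mathcal S(O)^\perp$. So I would fix $(J,K)\in\ker\mathcal L_O^*$ and aim to show $\int_O\left(J\cdot\overline e+K\cdot\overline h\right)\dx=0$ for all $(e,h)\in\mathcal S(O)$. By Lemma \ref{lemma:adjoint_L}, which is stated for an arbitrary measurable set, $\mathcal L_O^*(J,K)=-\nu\times(\widetilde H\times\nu)|_\Gamma$, where $(\widetilde E,\widetilde H)\in L^2(\Omega)^3\times L^2(\Omega)^3$ solves the adjoint Maxwell system on $\Omega$ with sources $K\chi_O$, $J\chi_O$ and $\nu\times\widetilde E|_{\partial\Omega}=0$. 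Hence $(J,K)\in\ker\mathcal L_O^*$ forces the tangential trace of $\widetilde H$ to vanish on $\Gamma$, i.e.\ $\nu\times\widetilde H|_\Gamma=0$.

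Next I would apply unique continuation on the exterior piece. In $\Omega\setminus\overline O$ the sources vanish, so the pair $(\widetilde E,-\widetilde H)$ solves the source-free system \eqref{Maxwell without boundary} there, with $\nu\times\widetilde E|_\Gamma=0$ and $\nu\times(-\widetilde H)|_\Gamma=0$. Since $\Omega\setminus\overline O$ is connected and, because $O\Subset\Omega$, reaches $\partial\Omega$ along $\Gamma$, Theorem \ref{Theorem UCP}(b) with $\mathcal F=\overline O$ gives $(\widetilde E,\widetilde H)=(0,0)$ in $\Omega\setminus\overline O$. As $\widetilde E,\widetilde H\in H(\mathrm{curl},\Omega)$, their tangential traces on $\partial O$ are well defined and, by the gluing property for $H(\mathrm{curl})$, coincide with the (zero) traces computed from the exterior side, so $\nu\times\widetilde E|_{\partial O}=\nu\times\widetilde H|_{\partial O}=0$. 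It then remains to integrate by parts over the Lipschitz domain $O$ in the spirit of the proof of Lemma \ref{lemma:adjoint_L}: inserting the adjoint equations for $(\widetilde E,\widetilde H)$ and the equations \eqref{eq:Maxwell_Runge_subset_1}--\eqref{eq:Maxwell_Runge_subset_2} for $(e,h)$ in $O$, and using that $\mu,\epsilon$ are real and symmetric, all volume terms cancel and only boundary integrals over $\partial O$ survive; these carry the factors $\nu\times\widetilde E|_{\partial O}$ and $\nu\times\widetilde H|_{\partial O}$ and therefore vanish. This yields $\int_O\left(J\cdot\overline e+K\cdot\overline h\right)\dx=0$, hence $\ker\mathcal L_O^*\subseteq\mathcal S(O)^\perp$, and the argument is complete.

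I expect the main obstacle to be this last step together with the trace bookkeeping: one must use the gluing property for $H(\mathrm{curl})$ to identify the interior and exterior tangential traces on $\partial O$, so that the vanishing of $(\widetilde E,\widetilde H)$ on $\Omega\setminus\overline O$ actually annihilates the $\partial O$ boundary terms, and one must check that $\mathcal F=\overline O$ meets the hypotheses of Theorem \ref{Theorem UCP}(b), namely connectedness of $\Omega\setminus\overline O$ and contact with $\Gamma$. Everything else is the same duality bookkeeping used for Theorem \ref{Main Theorem}.
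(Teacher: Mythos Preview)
Your proposal is correct and follows essentially the same route as the paper: reduce to showing $(e,h)\in\mathscr N(\mathcal L_O^*)^\perp$, use Lemma~\ref{lemma:adjoint_L} to identify $\ker\mathcal L_O^*$ with adjoint solutions whose tangential magnetic trace vanishes on $\Gamma$, apply Theorem~\ref{Theorem UCP}(b) to kill $(\widetilde E,\widetilde H)$ on $\Omega\setminus\overline O$, deduce vanishing tangential traces on $\partial O$, and finish by integration by parts on $O$. Your write-up is in fact a bit more careful than the paper's on two points---the sign flip $(\widetilde E,-\widetilde H)$ needed to match the hypotheses of Theorem~\ref{Theorem UCP}, and the $H(\mathrm{curl})$ trace-gluing argument on $\partial O$---which the paper leaves implicit.
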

\begin{proof}
Let $(e,h)\in L^2(O)^3\times L^2(O)^3$ solve \eqref{eq:Maxwell_Runge_subset_1}--\eqref{eq:Maxwell_Runge_subset_2}.
With the operator $\mathcal L_O$ introduced in Section \ref{Section Localized fields},
we will show that
\begin{align}\label{eq:Runge_ef_property}
(e,h)\in \overline{\mathscr{R}(\mathcal L_O)}=\left(\mathscr{R}(\mathcal L_O)^\perp\right)^\perp
=\mathscr{N}(\mathcal L_O^*)^\perp,
\end{align}
where closure and orthogonality are understood with respect to the $L^2(O)^3\times L^2(O)^3$-scalar product.
This shows that the assertion holds with a sequence $\left\{ f^{(\ell)}\right\} _{\ell\in\mathbb{N}}\subset H(\Gamma)$, and
it follows by density that the assertion holds with a sequence $\left\{ f^{(\ell)}\right\} _{\ell\in\mathbb{N}}\subset C_c^\infty(\Gamma)$.

To prove \eqref{eq:Runge_ef_property} let $(J,K)\in \mathscr{N}(\mathcal L_O^*)\subseteq L^2(O)^3\times L^2(O)^3$.
Then, by Lemma \ref{lemma:adjoint_L}, there exist $(E,H)\in L^2(\Omega)^3\times L^2(\Omega)^3$ that solve 
\begin{equation*}
\begin{cases}
\nabla\times E+ik\mu H=K\chi_O & \mbox{ in }\Omega,\\
\nabla\times H-ik\epsilon E=J\chi_O  &  \mbox{ in }\Omega,\\
\nu\times E|_{\partial \Omega}=0 & \mbox{ on }\partial \Omega,
\end{cases}
\end{equation*}
with $\nu\times(H\times\nu) |_{\Gamma}=\mathcal L_O^* (J,K)=0$. The unique continuation property in Theorem~\ref{Theorem UCP}
implies that $(E,H)=(0,0)$ on $\Omega\setminus \overline{O}$ and thus
\[
\nu \times E|_{\partial O}=0=\nu \times H|_{\partial O}.
\]
Hence, using the integration by parts formula \eqref{eq:int_by_parts_tangential}, it follows that
\begin{align*}
\int_O \left( e\cdot \overline{J} + h\cdot \overline{K}\right) \dx
&= \int_O  e \cdot \left( \nabla\times \overline{H}+ik\epsilon \overline{E} \right) \dx
+ \int_O h \cdot \left( \nabla\times \overline{E}-ik\mu \overline{H} \right) \dx\\
&= \int_O  \left( ( \nabla \times e) \cdot \overline{H}+ik \epsilon e \cdot \overline{E} \right) \dx
+ \int_O  \left( \nabla \times h \cdot \overline{E}-ik\mu h\cdot \overline{H} \right) \dx\\
&= \int_O   ( \nabla \times e -ik\mu h ) \cdot \overline{H} \dx
+ \int_O  (\nabla \times h +ik \epsilon e )\cdot \overline{E}  \dx=0.
\end{align*}
This shows $(e,h)\perp (J,K)$ so that \eqref{eq:Runge_ef_property} holds, and thus the assertion is proven. 
\end{proof}

\begin{rem}
The Runge approximation property in Theorem~\ref{Main Theorem Runge} implies the localization property in Theorem~\ref{Main Theorem} by the following argument. Let $D\Subset\Omega$ be a closed set with connected complement $\Omega \setminus D$.
and $M\subseteq \Omega$ be an open set with $M\not\subseteq D$ (see again Figure \ref{fig:1}).
By shrinking $M$ and enlarging $D$, we can assume that $M$ is a open set and $D$ is a closed sets with Lipschitz boundaries,
$\overline M \cup D\Subset \Omega$,  
$\overline M\cap D=\emptyset$ and that $\Omega\setminus (\overline M\cup D)$ is connected.

The unique continuation property in Theorem~\ref{Theorem UCP} implies that a solution of the Maxwell system in $\Omega$ 
with non-trivial boundary data cannot vanish identically on $M$. This shows that there exists a non-zero solution
of the Maxwell system on $M$. We extend this solution by zero on $D$, and obtain a solution $(e,h)\in L^2(O)\times L^2(O)$
on $O:=M\cup \mathrm{int}D$ with $(e,h)|_{\mathrm{int}D}\equiv (0,0)$ and $(e,h)|_M\not\equiv (0,0)$. Then the Runge approximation sequence from 
Theorem~\ref{Main Theorem Runge} converges to zero on $D$ but not on $M$ and a simple scaling argument as in the proof
of Theorem~\ref{Main Theorem} in Section \ref{Section Localized fields} gives a sequence of electromagnetic fields with
\begin{equation*}
\int_{M}\left(|E^{(\ell)}|^{2}+|H^{(\ell)}|^2\right)\dx\to\infty\quad \mbox{ and }\quad \int_{D}\left(|E^{(\ell)}|^{2}+|H^{(\ell)}|^2\right)\dx\to0\mbox{ as }\ell\to\infty,
\end{equation*}
and $\nu \times E^{(\ell)}\in C_c^\infty(\Gamma)$ for all $\ell \in \mathbb N$, which also proves Theorem~\ref{Main Theorem}.
\end{rem}


\textbf{}\\
\textbf{Acknowledgments.} H. Liu was supported by the FRG and
startup grants from Hong Kong Baptist University, and Hong Kong RGC
General Research Funds, 12302415 and 12302017.

\bibliographystyle{plain}
\bibliography{ref}

\end{document}